\documentclass[a4paper,12pt]{article}
\usepackage{preamble}

\bibliography{references}

\title{Compact-Open Dualities\\for Stably Continuous Posets}
\date{\today}
\author{Jérémie Marquès}

\begin{document}
\maketitle

\begin{abstract}
	We organize and generalize several dualities involving continuous posets. The main theorem reads $\cSt_α\cInf_{α'}\cCont_{β'}\cSup_{β} ≃ (\cSt_β\cInf_{β'}\cCont_{α'}\cSup_{α})^\op$, where $\cSt$, $\cInf$, $\cCont$ and $\cSup$ refer to stability, completeness, continuity and cocompleteness. The indices are ``ladders,'' i.e., classes of sets $λ$ stable under dependent sums and quotients, with associated notions of $λ$-small infima and $λ$-filtered suprema. In the second half of the paper, we discuss algebraicity, proximity lattices and perfect maps.
\end{abstract}

\showcontents
\vspace{1em}

The goal of this paper is to organize several dualities that find their origin mainly in \cite{Law1979}. What started as an uninspired bookkeeping task turned out to reveal a nice symmetry between completeness and continuity on one hand, and between stability and cocompleteness on the other hand. For the sake of illustration, the reader can glance at the following list of dualities:
\begin{itemize}
	✦ Continuous posets are self-dual under Lawson duality. \cite{Law1979}
	✦ Continuous posets are dual to completely distributive frames. \cite{Law1979,Hof1981}, \cite[Thm.~7.21]{GehvGoo2024}
	✦ Completely distributive lattices are self-dual, constructively. \cite{RosWoo1994}
	✦ Sup-lattices are self-dual, constructively.
	✦ ``The Lawson duals of continuous preframes with bottom are precisely the continuous preframes with a compact top element.'' \cite[Prop.~I-1.8.2]{Kli2012}, \cite[Prop.~9.5]{Law1979}
	✦ ``The continuous complete Sup-lattices\footnote{``Sup-lattice'' here means that \emph{non-empty} suprema exist.} are precisely the Lawson duals of stably locally continuous domains [with a top element].'' \cite[Rmk.~after~Prop.~I-1.5.5]{Kli2012} \cite[Prop.~9.4]{Law1979}
	✦ The continuous meet-semilattices are self-dual. \cite[Thm.~7.4]{Law1979}
	✦ Stably continuous frames are self-dual under de Groot duality, constructively. \cite{Tow2006}
	✦ Stably completely distributive lattices are dual to cocomplete continuous posets, constructively.
\end{itemize}
It seems necessary to develop a uniform terminology for referring to these classes of posets. We will use \emph{ladders}%
\footnote{This terminology was suggested by André Joyal in the context of a different work. A ladder is what can be used to ``climb up'' a poset. We only consider here \emph{discrete} ladders, but there is an ordered notion which includes for instance the ladder of filtered posets.}
to parametrize them. A ladder is a class $α$ of sets which is closed under quotients and dependent sums. It is thus roughly the same thing as a regular cardinal, except that the inclusion of the empty set is independent from the rest, and that the totality of all sets is a ladder. There are associated notions of $α$-joins and $α$-filtered suprema.

The Main Theorem~\ref{thm:StLexContSup-dual} involves four ladders $α ⊆ α'$ and $β ⊆ β'$. It states that
\begin{equation*}
	\cSt_α\cInf_{α'}\cCont_{β'}\cSup_β ≃ (\cSt_β\cInf_{β'}\cCont_{α'}\cSup_{α})^\op
\end{equation*}
where the symbols $\cSt$, $\cInf$, $\cCont$, $\cSup$ refer respectively to stability, completeness, continuity and cocompleteness. The posets involved on both sides are supposed to ``have enough points'' in an appropriate sense. Using the terminology of Section~\ref{sec:dual}, $\cSt_α\cInf_{α'}\cCont_{β'}\cSup_β$ is the category of $α$-stably $β'$-continuous posets which are $α'$-complete, $β$-cocomplete and have enough $(α',β')$-points. The following conditions ensure that the requirement of having enough $(α',β')$-points is automatically satisfied:
\begin{itemize}
	✦ $α'$ consists only of finite sets; this is essentially the hypothesis taken in \cite{Law1979}.
	✦ $α = α'$; this has the advantage of working constructively.
	✦ $β'$ is the class of all sets; this also works constructively.
\end{itemize}
Theorem~\ref{thm:StLexContSup-dual} builds on Theorem~\ref{thm:FlatCont-dual}, stating that
\begin{equation*}
	\cFlat_α\cCont_β ≃ (\cFlat_β\cCont_α)^\op
\end{equation*}
where $\cFlat_α\cCont_β$ is the category of ``$β$-continuous posets with enough $(α,β)$-points.'' This duality relies on the interaction between a poset and its dual inside a shared \emph{$(α,β)$-intermediate structure} coming from the theory of polarities and canonical extensions \cite{FusGeh2025}.

In Section~\ref{sec:supp}, we adapt the theory of proximity lattices from \cite{JunSun1996} (see also \cite{Smy1992a,vanGool2012,Kaw2021}) and we briefly discuss perfect maps. We leave several topics untouched: distributivity laws, canonical extensions \cite{GehHar2001}, patch constructions, and the categorical structure of $\cSt_α\cSup_α$ (Is it star-autonomous? When is it compact closed?).

\paragraph{Related works and general comments}
The proofs and ideas involved here are all standard in lattice theory \cite{GieHofKeiLawMisSco2003}. As already mentioned, the main source of inspiration is the study of continuous posets in \cite{Law1979}. This paper, together with \cite{Hof1981a,Hof1981}, initiated further work on continuous posets \cite{Mar1981,HofMis1985,Law1987}.

Generalizations of continuous posets are studied under the name of $\Zc$-continuous posets \cite{WriWagTha1978,Nov1982,BanErn1983,Bar1996,Ern1999,ErnZha2001,YuaLi2019}, where $\Zc$ plays a role analogous to the ladders of the current paper. One difference is that we restrict here the notion of ``ladder'' to classes of sets, since $\Zc$-posets seem a priori too general for the goal of the current paper; this would, of course, be worth investigating. A theory of $α$-complete posets is developed in \cite[Ch.~XII]{BalDwi1974}, where $α$ is a cardinal. It mainly focuses on Boolean algebras and distributivity conditions. Coincidentally, we use the same letters $α$ and $β$ here, but they do not play the same role.

In \cite{Ern2016}, a choice-free version of Lawson's self-duality of continuous posets is developed. The core observation is that we can isolate the use of dependent choice in \cite{Law1979} into a simple condition that might not hold constructively. We use the same abstraction here (``having enough points''). Constructive validity was, in fact, important in the development of the current work because it provided a criterion to isolate the most fundamental aspects. The first step was to generalize de Groot duality, which resulted into Theorem~\ref{thm:StContSup-dual}. It was then noticed that the reasoning carries over into a more general context if we abstract away the condition on points. However, I decided to not emphasize the constructive aspects because I believe this is, at my current stage of understanding, gratuitous generality (see Remark~\ref{rmk:ladder}).

The idea of ``compact-open dualities'' is widely explored in the literature. For instance, the ``bi-dcpo's'' of the recent work \cite{AbbJun2026} play a role similar to the ``$(α,β)$-intermediate structures'' we use here. It is explained in \cite{JunSun1996} (see also \cite{Smy1992a}) how to present stably continuous frames using \emph{proximity lattices} while respecting the ``compact-open duality.'' The relation with canonical extensions is explored in \cite{vanGool2012}. See also \cite{Kaw2021} and the references therein for more information about proximity lattices and their history. The symmetry between compact and open is fundamental to the theory of canonical extensions \cite{GehVos2011,FusGeh2025}. The interested reader can consult the literature on bitopological spaces, d-frames and similar notions \cite{Kel1963,Kop1995,Jung06,Kli2012,Jak17,Sua2022}. The compact-open duality in topology specializes the proper-étale duality in topos theory, where the perfect symmetry between the two sides is however lost \cite{Ver1994,MoeVer2000,Tow2006}.

\paragraph{Conventions}
When $α$ is a ladder (Definition~\ref{dfn:ladder}), we use the symbols $\sjoin{α}$, $\fjoin{α}$, $\smeet{α}$, $\fmeet{α}$ to denote respectively $α$-joins, $α$-filtered suprema, $α$-meets and $α$-filtered infima. The terminology ``co-filtered'' will not be used. By a sup-lattice, we mean a poset with all suprema. Given a poset $P$, we denote by $\Down(P)$ the poset of down-sets of $P$ ordered by inclusion, i.e., the free sup-lattice on $P$. Dually, $\Up(P)^\op$ is the free inf-lattice on $P$. If $P$ is a poset, we denote by ${↓}p$ the down-set generated by $p$ and by ${↑}p$ the up-set generated by $p$. This identifies canonically $P$ with a subset of $\Down(P)$, and also with a subset of $\Up(P)^\op$. By a \emph{poset map} we mean an order-preserving map. We denote by $\fix(e)$ the set of fixpoints of an idempotent $e$.

\paragraph{Acknowledgments} I thank Mai Gehrke and Sam van Gool for their valuable comments on this paper. This work has received funding from the European Research Council under the European Union's Horizon 2020 research and innovation programme (Synergy Project Malinca, ERC Grant Agreement No 670624).

\section{Polarities and their intermediate structures}

Polarities are a way of presenting complete posets by means of a relation between two sets. It will be useful here to consider an ordered version. We review some basic facts and we fix some conventions. We refer to \cite[Sec.~3.1]{FusGeh2025} for more details.

In general, every algebraic structure $A$ is the quotient of a free structure. This can be used to present $A$ by generators and relations. Complete posets can be viewed as algebraic structures in two ways: either as sup-lattices or as inf-lattices. If $L$ is a complete lattice, it can therefore be written both as a quotient of a free sup-lattice, and as a quotient of a free inf-lattice:
\[\begin{tikzcd}
	\Down(X) \ar[r,"\text{sup-preserving}",->>,"p"'] &[5em] L &[5em] \Up(Y)^\op \ar[l,"\text{inf-preserving}"',->>,"q"]
\end{tikzcd}\]
By taking the left adjoint of $q$, we obtain an image-factorization diagram.
\[\begin{tikzcd}
	\Down(X) \ar[r,"\text{sup-preserving}",->>] &[4em] L \ar[r,"\text{sup-preserving}",>->] &[4em] \Up(Y)^\op
\end{tikzcd}\]
Hence any sup-lattice $L$ can be written as the image of some sup-preserving map $f : \Down(X) → \Up(Y)^\op$. Such a map $f$ is uniquely determined by an up-set $R ⊆ X^\op × Y$, verifying $R(x,y) ⇔ f(x)≤y$, using the canonical identifications $X ⊆ \Down(X)$ and $Y ⊆ \Up(Y)^\op$. In terms of the maps $p$ and $q$, we have $R(x,y) ⇔ p(x)≤q(y)$. We call $R$ a \emph{polarity} and we say that the image of $f$ is the complete lattice \emph{presented} by $R$.\footnote{The complete lattice presented by $R$ is often defined equivalently as the poset of fixpoints of the adjunction between $f$ and its right adjoint. See \cite[Prop.~1.39]{FusGeh2025}.}

Given a poset $X$ and a complete lattice $L$ we say that a poset map $X→L$ is \emph{join-dense} if the induced map $\Down(X)→L$ is surjective. Meet-density is defined dually. From the discussion above, the complete lattice $L$ presented by $R ⊆ X^\op×Y$ is uniquely determined by the following conditions \cite[Thm.~3.2]{FusGeh2025}:
\begin{itemize}
	✦ There is a join-dense poset map $p : X→L$.
	✦ There is a meet-dense poset map $q : Y→L$.
	✦ $p(x) ≤ q(y) ⇔ R(x,y)$.
\end{itemize}

\begin{exmp}{}{KOrd}
	Let $X$ be a compact ordered space, i.e., a compact Hausdorff space equipped with a closed order relation. Let $\Oc$ be the poset of open up-set of $X$ and let $\Kc$ be the poset of closed up-sets of $X$. Let $R ⊆ \Kc^\op × \Oc$ be the containment relation: $R(A,U) ⇔ [A⊆U]$. Then $R$ is a polarity which presents the complete lattice of all up-sets of $X$. This example should be kept in mind during the whole paper. It is fundamental to the theory of canonical extensions.
\end{exmp}

Based on the example above, we will write polarities as pairs $(\Oc,\Kc)$. The relation is omitted and is simply denoted by the symbol $≤$.

\paragraph{The intermediate structure} Let $L$ be the complete lattice presented by a polarity $(\Oc,\Kc)$. The \emph{intermediate structure} of $(\Oc,\Kc)$ is the disjoint union $\Oc∪\Kc$ equipped with the preorder induced by the map $\Oc∪\Kc → L$. This preorder is called the \emph{polarity order}, to distinguish it from the original orders of $\Oc$ and $\Kc$. It can be characterized as follows, where $U,V∈\Oc$ and $A,B∈\Kc$ \cite[Prop.~3.4]{FusGeh2025}:
\begin{itemize}
	✦ $A≤U$ is specified directly by the polarity.
	✦ $U≤V$ iff $∀A∈\Kc : A≤U ⇒ A≤V$.
	✦ $A≤B$ iff $∀U∈\Oc : B≤U ⇒ A≤U$.
	✦ $U ≤ A$ iff $∀B∈\Kc,V∈\Oc : (B≤U) ∧ (A≤V) ⇒ B≤V$.
\end{itemize}

\begin{rmk}{}{}
	The name ``intermediate structure'' comes from its use in \cite{GhiMel1997} as an ``intermediate level'' between syntax and semantics. The relevance of this idea was later rediscovered in \cite{DunGehPal2005} in the context of canonical extensions. See also \cite{GehJanPal2013}. (I thank Mai Gehrke for the explanation.)
\end{rmk}

\paragraph{Separating polarities} When the polarity order coincides on $\Oc$ and $\Kc$ with the original orders, we say that the polarity is \emph{separating}. Note that we could still have $A=U$ in the polarity order with $A ∈ \Kc$ and $U ∈ \Oc$.

\begin{exmp}{}{}
	Let $\Oc$ be a frame and let $X$ be a set of points of $\Oc$, ordered by the specialization order. There is a polarity $(\Oc,X^\op)$ in which $[x≤U] ⇔ x∈U$. This polarity is separating if and only if $X$ is a separating set of points of $\Oc$.
\end{exmp}

\begin{lem}{}{polarity-sep-inf}
	Let $(\Oc,\Kc)$ be a separating polarity presenting a complete lattice $L$. Then the map $\Kc → L$ preserves all infima that exist in $\Kc$.
\end{lem}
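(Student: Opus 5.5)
We have to show: if $\{A_i\}$ has infimum $A$ in $\Kc$, then $q(A) = \bigwedge_i q(A_i)$ in $L$, where $q : \Kc → L$ is the canonical map. The plan is to use the meet-density of $\Oc$ in $L$ and reduce the claim to a statement about the polarity order on $\Kc$, which, because the polarity is separating, is just the original order of $\Kc$.

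We write $p:\Kc→L$ for the join-dense map and $q:\Oc→L$ for the meet-dense map. Here $\Kc$ plays the role of $X$ and $\Oc$ the role of $Y$, consistent with $A≤U$ being the polarity relation.

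\emph{Step 1.} Since $A ≤ A_i$ in $\Kc$ and $p$ is monotone, $p(A)$ is a lower bound of the $p(A_i)$. Hence $p(A) ≤ m := \bigwedge_i p(A_i)$.

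\emph{Step 2.} For the reverse inequality, meet-density of $q$ means every element of $L$ is the meet of the $q(U)$ above it. So it suffices to show: for every $U ∈ \Oc$ with $p(A) ≤ q(U)$, i.e.\ $A≤U$, we have $m ≤ q(U)$. Fix such a $U$.

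\emph{Step 3.} Set $C := \bigwedge\{q(V) : V∈\Oc,\ m≤q(V)\}$. By meet-density, $C = m$.

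\emph{Step 4.} This is the key step. Since the polarity is separating, the polarity order on $\Kc$ coincides with the original order. Thus for $B∈\Kc$, we have $B ≤ A_i$ in $\Kc$ iff $p(B) ≤ p(A_i)$. By the characterization of the polarity order, the latter says that every $U$ with $A_i ≤ U$ also satisfies $B ≤ U$.

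We exploit this via join-density. Write $m = \bigvee\{p(B) : B∈\Kc,\ p(B)≤m\}$. For each such $B$ and each $i$, we have $p(B)≤p(A_i)$. By separation, $B ≤ A_i$ in $\Kc$ for all $i$, so $B$ is a lower bound of the family in $\Kc$. Hence $B ≤ A$, since $A$ is the infimum in $\Kc$. Therefore $p(B) ≤ p(A)$, and taking the join over all such $B$ gives $m ≤ p(A)$.

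This makes Steps 2 and 3 unnecessary: the argument only uses join-density of $p$ together with separation on $\Kc$. Combined with Step 1, it yields $p(A) = \bigwedge_i p(A_i)$.

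The main subtlety is exactly the use of separation. It is needed so that $p(B) ≤ p(A_i)$ in $L$ can be pulled back to $B ≤ A_i$ in $\Kc$; only then does the universal property of the infimum in $\Kc$ apply. Without separation, the infimum in $\Kc$ could sit strictly below the one in $L$. The same argument applies to the empty family: the top of $\Kc$, if it exists, maps to the top of $L$, because every $p(B)$ lies below it.
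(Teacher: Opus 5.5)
Your proof is correct and is essentially the paper's argument. Separation makes $\Kc \to L$ an order-embedding, and join-density lets you check the meet's universal property against joins of images of elements of $\Kc$; Step 1 together with Step 4 is exactly the paper's chain of equivalences, split into two inequalities. Steps 2--3 are, as you noticed, superfluous and can be deleted, and you should make the name of the map $\Kc \to L$ consistent ($p$, not $q$, in your opening sentence).
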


\begin{proof*}{}
	Since $(\Oc,\Kc)$ is separating, the map $\Kc → L$ is an order-embedding. It is also join-dense. Let $⋀_i A_i$ be an infimum in $\Kc$. Let $⋁_j B_j ∈ L$ with $(B_j)_j ⊆ \Kc$. Then
	\[ ⋁_j B_j ≤ ⋀_i A_i ⇔ ∀j,i : B_j ≤ A_i ⇔ ∀i : ⋁_j B_j ≤ A_i \text{.} \qedhere \]
\end{proof*}

\section{$(α,β)$-duality}
\label{sec:dual}

\subsection{Continuous posets}

\begin{dfn}{}{ladder}
	A (discrete) \emph{ladder} is a class of sets $α$ containing the singletons and that is closed under quotients and dependent sums. More explicitly:
	\begin{itemize}
		✦ If $I ∈ α$ and $I↠J$ is a surjection, then $J ∈ α$.
		✦ If $I ∈ α$ and $(J_i)_{i∈I} ⊆ α$, then $∑_{i∈I} J_i ∈ α$.
	\end{itemize}
\end{dfn}

\begin{exmp}{}{ladder}
	If $κ$ is a regular cardinal, we denote by $κ$ the ladder of sets of cardinality strictly less than $κ$. We denote by $κ^*$ the ladder of nonempty sets of cardinality strictly less than $κ$. We denote by $∞$ the ladder of all sets and by $∞^*$ the ladder of nonempty sets. We denote by $0$ the minimal ladder, containing only the singletons, and by $0^+$ the ladder containing the singletons and the empty set.
\end{exmp}

\begin{rmk}{}{ladder}
	In a sheaf topos, a ladder $α$ is instead encoded by the class of maps with $α$-small fibers. The axioms are:
	\begin{itemize}
		✦ $α$ is stable under pullbacks, sums, and for each pullback of the form below with $f$ surjective, $f^*(g) ∈ α ⇔ g ∈ α$ (in topos-theoretic language, $α$ is a stack).
		\[\begin{tikzcd}
			f^*(X) \ar[r] \ar[d,"f^*(g)"'] & X \ar[d,"g"]\\
			A \ar[r,->>,"f"] & B
		\end{tikzcd}\]
		✦ $α$ is stable under composition and contains the identities.
		✦ Let $f : A↠B$ be a surjection and let $g : B→C$. If $fg ∈ α$, then $g ∈ α$.
	\end{itemize}
	This definition coincides with Definition~\ref{dfn:ladder} in the case of the topos of sets: to a ladder $α$ in the sense of Definition~\ref{dfn:ladder}, we associate the class $\ovl{α}$ of all maps $f : X→Y$ such that $f^{-1}(y) ∈ α$ for all $y ∈ Y$. The stability of $α$ under dependent sums corresponds to the stability of $\ovl{α}$ under composition.
	
	The proofs below often require that every $α$-small object is (internally) projective. The only example I have in mind would be the ladder of cardinal finite objects, thus we will not pursue here the theory in such generality.
\end{rmk}

Let $α$ be a ladder. A set is \emph{$α$-small} when it is in $α$. An \emph{$α$-join} in a poset, written $\sjoin{α}_i x_i$, is a supremum indexed by an $α$-small set. An $α$-meet $\smeet{α}_i x_i$ is defined similarly. A poset is \emph{$α$-filtered} if every $α$-small subset has an upper bound. An \emph{$α$-filtered supremum} in a poset $P$ is a supremum $\fjoin{α}_{i∈I} x_i$ where $I$ is an $α$-filtered poset and $(i ∈ I) ↦ x_i ∈ P$ is order-preserving. Dually, an \emph{$α$-filtered infimum}, written $\fmeet{α}_i x_i$, is an $α$-filtered supremum in $P^\op$.

A down-set of a poset $P$ is an \emph{$α$-ideal} if it is $α$-filtered. We define dually $α$-filters. We denote by $\Idl_α(P)$ the poset of $α$-ideals of $P$. It is the free poset with $α$-filtered suprema over $P$. Dually, we denote by $\Flt_α(P)$ the poset of $α$-filters of $P$. We will often use the notation $\Flt_α^\op(P)$ to denote the poset of $α$-filters of $P$ ordered by reverse inclusion.

\begin{dfn}{}{}
	A poset $P$ is \emph{$α$-continuous} if there is a string of adjoints
	\[ {↡}_α(—) ⊣ \fjoin{α}(—) ⊣ {↓}(—) \]
	where ${↓}(—) : P→\Idl_α(P)$ is the map sending $x$ to the principal down-set ${↓}x$.
\end{dfn}

If $P$ is $α$-continuous, the relation $x ≪_α y$ is defined as $x ∈ {↡}_αy$. Equivalently, any $α$-filtered supremum above $y$ contains a term above $x$. This definition works more generally in any poset with $α$-filtered suprema, and $P$ is $α$-continuous if and only if every $x∈P$ is the $α$-filtered supremum of $\setst{y}{y ≪_α x}$. 

We say that a binary relation $R$ on a poset $P$ is \emph{order-compatible} if $[≤] ∘ R ∘ [≤] = R$, and that it is \emph{idempotent} if $R∘R = R$. The relation $≪_α$ on an $α$-continuous poset is order-compatible and idempotent.

A down-set of a poset $P$ with $α$-filtered suprema is \emph{$α$-closed} if it is closed under $α$-filtered suprema. Its complement is said to be \emph{$α$-open}. These subsets are stable under arbitrary unions. If $P$ is $α$-continuous, the $α$-open subsets are the fixpoints of the infosys \cite{Vic1993} determined by $≪_α$. The following lemma is also standard.

\begin{lem}{}{charact-open}
	In an $α$-continuous poset, an up-set $F$ is $α$-open iff
	\[ ∀x ∈ F : ∃ y ∈ F : y ≪_α x \text{.} \]
\end{lem}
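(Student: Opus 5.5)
We prove the two implications separately, using only the defining properties of $≪_α$: for every $x$, the set $\setst{y}{y ≪_α x}$ is an $α$-ideal whose supremum is $x$, and $y ≪_α x$ means that every $α$-filtered supremum above $x$ has a term above $y$. Recall that $F$ is $α$-open exactly when its complement $P∖F$ is a down-set closed under $α$-filtered suprema. Since $F$ is assumed to be an up-set, the complement is automatically a down-set. So in both directions the only thing to check is closure under $α$-filtered suprema.

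For the forward direction, suppose $F$ is $α$-open and let $x ∈ F$. By $α$-continuity, $x = \fjoin{α}\setst{y}{y≪_α x}$, and this indexing set is $α$-filtered. If no $y ≪_α x$ lay in $F$, this whole family would lie in the complement $P∖F$. That complement is $α$-closed, so its $α$-filtered supremum $x$ would also lie in $P∖F$, a contradiction. Hence some $y ≪_α x$ lies in $F$. This direction only uses that $x$ is the $α$-filtered supremum of its $≪_α$-predecessors.

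For the converse, assume the interpolation-type condition holds. Let $(x_i)_{i∈I}$ be an $α$-filtered family with supremum $x ∈ F$; we must show that some $x_i$ lies in $F$. By the hypothesis, there is $y ∈ F$ with $y ≪_α x$. By the definition of $≪_α$, the $α$-filtered supremum $\fjoin{α}_i x_i = x$ has a term $x_i ≥ y$. Since $F$ is an up-set, $x_i ∈ F$. So the complement of $F$ is closed under $α$-filtered suprema.

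There is no real obstacle. The one point needing care is using the correct characterization of $≪_α$, namely the "any $α$-filtered supremum above $x$ contains a term above $y$" form. In the first direction, one must also use that $\setst{y}{y ≪_α x}$ is genuinely an $α$-ideal, which is given by the adjunction ${↡}_α ⊣ \fjoin{α}$. No choice principles are needed, since each step only picks a witness whose existence is asserted directly.
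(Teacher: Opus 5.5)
Your proof is correct, and it is the standard argument the paper has in mind: the paper states this lemma without proof. The forward direction uses that $x$ is the $\alpha$-filtered supremum of the $\alpha$-ideal $\{y : y \ll_\alpha x\}$, and the converse uses that $y \ll_\alpha x$ forces some term of any $\alpha$-filtered supremum above $x$ to lie above $y$, hence in the up-set $F$. Your forward step is a proof by contradiction, so it uses excluded middle rather than choice, but this is harmless here because the paper defines $\alpha$-open sets as complements and works classically.
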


Equivalently, the $α$-open subsets are the unions of subsets of the form ${↟}_αy$. Dually, in a poset $P$ with $α$-filtered infima, an \emph{$α$-co-open} is a down-set whose complement is closed under $α$-filtered infima.

\subsection{The $(α,β)$-intermediate structure of a $β$-continuous poset}

Let $α$ and $β$ be two ladders. Let $\Oc$ be a $β$-continuous poset. An \emph{$(α,β)$-point} of $\Oc$ is a $β$-open $α$-filter. If $\Oc$ has $α$-meets, it is thus the same thing as a map from $\Oc$ to the two-element lattice preserving $α$-meets and $β$-filtered joins. We denote by $\pt_{αβ}(\Oc)$ the poset of $(α,β)$-points of $\Oc$, ordered by inclusion.

\begin{exmp}{}{}
	If $\Oc$ is a continuous poset, i.e., an $ω$-continuous poset, then an $(ω,ω)$-point of $\Oc$ is a Scott-open filter. The Lawson dual \cite{Law1979} of $\Oc$ is thus $\pt_{ωω}(\Oc)$.
\end{exmp}


Fix two ladders $α$ and $β$. Let $\Oc$ be a $β$-continuous poset. We will use $\Kc$ to denote $\pt_{αβ}(\Oc)$ with the opposite order. We will systematically use the variables $U,V,W$ to denote elements of $\Oc$, and the variables $A,B,C$ to denote elements of $\Kc$, without necessarily mentioning for instance $A∈\Kc$. We define the polarity $(\Oc,\Kc) = (\Oc,\pt_{αβ}^\op(\Oc))$ by the relation $A ≤ U ⇔ U∈A$. We will from now on work with the associated polarity order.

The reader should keep in mind Example~\ref{exmp:KOrd} where $\Oc$ is the lattice of open up-sets of some compact ordered space, and that $\Kc$ is the lattice of closed up-sets of the same space. Then $A≤U$ means that $A$ is included in $U$.

\paragraph{Having enough points} The most obvious definition of ``$\Oc$ has enough $(α,β)$-points'' is probably to require that ``the points determine the order'' in the sense that $(\Oc,\Kc)$ is separating, in analogy with the case of frames. We will require something stronger, namely that ``the points determine the $≪_β$ relation.''

We write temporarily $≤^\oo$ for the original order on $\Oc$, and we write $U ≤^\oo A$ instead of $∀V : A≤V ⇒ U≤^\oo V$. If $(\Oc,\Kc)$ is separating, then these relations coincide with the polarity order $≤$.

\begin{dfn}{}{enough-pts}
	We say that a $β$-continuous poset $\Oc$ has \emph{enough $(α,β)$-points} if for all $U ≪_β V$ in $\Oc$, there is $A ∈ \Kc$ such that $U ≤^\oo A ≤ V$.
\end{dfn}

More explicitly, it means that if $U≪_βV$, then we can find some $β$-open $α$-filter $F$ such that $V ∈ F ⊆ {↑}U$. The converse of Definition~\ref{dfn:enough-pts} is always satisfied: if $U ≤^\oo A ≤ V$, then $U ≪_β V$.

\begin{lem}{}{enough-points-impl-sep}
	If $\Oc$ has enough $(α,β)$-points, then $(\Oc,\Kc)$ is separating.
\end{lem}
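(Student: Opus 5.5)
We must show that the polarity order restricted to $\Oc$ coincides with the original order, and likewise for $\Kc$.

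\textbf{The $\Kc$ side.} For $A,B∈\Kc$, the polarity order gives $A≤B$ iff every $U$ with $B≤U$ satisfies $A≤U$. In terms of filters this reads: $U∈B ⇒ U∈A$, i.e.\ $B⊆A$ as subsets of $\Oc$. This is exactly the order of $\Kc = \pt_{αβ}^\op(\Oc)$, the reverse of inclusion. So the $\Kc$ side is automatic and needs no hypothesis.

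\textbf{The $\Oc$ side.} For $U,V∈\Oc$, the polarity order gives $U≤V$ iff every point containing $U$ also contains $V$. Points are up-sets, so $U≤^\oo V$ implies $U≤V$. The converse is where we use the enough-points hypothesis, together with $β$-continuity.

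Assume $U≤V$ in the polarity order. To show $U≤^\oo V$, we write $U=\fjoin{β}{↡}_βU$. It then suffices to show $W≤^\oo V$ for every $W≪_βU$.

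Fix $W≪_βU$. By Definition~\ref{dfn:enough-pts} there is $A∈\Kc$ with $W≤^\oo A≤U$. Concretely:
\begin{itemize}
\item $U∈A$, since $A≤U$;
\item every element of $A$ lies in ${↑}W$, since $W≤^\oo A$.
\end{itemize}
Because $U∈A$ and $U≤V$, we get $V∈A$. Hence $W≤^\oo V$.

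Since $V$ is then an upper bound of ${↡}_βU$, we conclude $U=\fjoin{β}{↡}_βU≤^\oo V$.

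\textbf{Main obstacle.} The only step needing care is unpacking $W≤^\oo A$. By definition it means that $A≤V'$ implies $W≤^\oo V'$ for all $V'$, that is, $A⊆{↑}W$. Once this is read off correctly, the argument is a direct chase through the definitions.
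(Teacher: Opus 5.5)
Your proof is correct and follows essentially the same route as the paper. The $\Kc$ side is immediate. For the $\Oc$ side, each $W \ll_\beta U$ lies below a point $A$ that contains $U$ and hence also $V$, and $U$ is the $\beta$-filtered join of these $W$. The only cosmetic difference is that the paper records the intermediate step as $W \ll_\beta V$, whereas you read off $W \le V$ (in the original order of $\Oc$) directly from the definition of $W \le A$.
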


\begin{proof*}{}
	The restriction of the polarity order on $\Kc$ is trivially the original order. Let $U, V ∈ \Oc$ such that $∀A : A≤U ⇒ A≤V$. For all $W≪_βU$ there is $A$ such that $W≤^\oo A≤U$, hence $W≤^\oo A≤V$, hence $W≪_βV$. This shows that $U=⋁\setst{W}{W≪_βU}≤^\oo V$.
\end{proof*}

Thanks to Lemma~\ref{lem:enough-points-impl-sep}, we will not need to use the symbol $≤^\oo$ in the rest of this paper.
We recall the standard argument showing that $\Oc$ has enough points when $α$ contains only finite sets \cite[Prop.~2.2]{Law1979}.

\begin{prop}{}{}
	If $α$ contains only finite sets or $β=∞$, then $\Oc$ has enough $(α,β)$-points.
\end{prop}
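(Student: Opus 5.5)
The plan is to unwind Definition~\ref{dfn:enough-pts}: given $U ≪_β V$ in $\Oc$, we must produce a $β$-open $α$-filter $F$ with $V ∈ F ⊆ {↑}U$. The two hypotheses are treated separately. In the case $β=∞$ the relation $≪_∞$ collapses to $≤$, and we take $F={↑}U$. In the case where $α$ contains only finite sets, we build $F$ as the union of a descending chain of principal up-sets obtained by interpolation, as in \cite[Prop.~2.2]{Law1979}.

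\emph{Case $β=∞$.} An $∞$-filtered poset is nonempty and every subset has an upper bound, so the poset itself has a top element. Hence an $∞$-filtered supremum is just the value at the top index. Consequences:
\begin{itemize}
	\item Every down-set is closed under $∞$-filtered suprema, so every up-set is $∞$-open.
	\item We have $x ≪_∞ y$ iff $x ≤ y$. Indeed, the trivial family indexed by a singleton with value $y$ is $∞$-filtered with supremum $y$, and if $x ≤ y$ then any $∞$-filtered supremum above $y$ has its top term above $x$.
\end{itemize}
So $U ≪_∞ V$ gives $U ≤ V$. Take $F={↑}U$. It is an up-set containing $V$ and contained in ${↑}U$. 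It is an $α$-filter for any ladder $α$, since every subset, including the empty one, has the lower bound $U$. It is $∞$-open by the first point.

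\emph{Case $α$ finite.} Since $≪_β$ is idempotent, $U ≪_β W$ always yields some $W'$ with $U ≪_β W' ≪_β W$. Using dependent choice, build a sequence $V=V_0$, $V_1$, $V_2, \dots$ with
\[ U ≪_β V_{n+1} ≪_β V_n \]
for all $n$. Put $F=⋃_n {↑}V_n$, and check the required properties in order.
\begin{itemize}
	\item \emph{Containment.} $F$ is an up-set containing $V$, and $F ⊆ {↑}U$ because $U ≤ V_n$ for every $n$.
	\item \emph{Filter.} The sets ${↑}V_n$ form an increasing chain. Hence any finite subset of $F$ lies in some ${↑}V_n$ and has the lower bound $V_n$; the empty subset has a lower bound since $F$ is nonempty. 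Since every $α$-small set is finite, $F$ is an $α$-filter.
	\item \emph{$β$-openness.} Apply Lemma~\ref{lem:charact-open}. If $x ∈ F$, then $x ≥ V_n$ for some $n$, so $V_{n+1} ≪_β V_n ≤ x$. By order-compatibility $V_{n+1} ≪_β x$, and $V_{n+1} ∈ F$.
\end{itemize}

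\emph{Main obstacle.} The only real subtlety is in the finite case. The sequence needs dependent choice, since no single step gives a filter; one cannot take something like $\setst{W}{U ≪_β W}$, which need not be filtered. Finiteness of $α$ is exactly what lets a countable chain of principal up-sets be an $α$-filter: for infinite $α$, a countable family picking elements from every ${↑}V_n$ need not have a lower bound in $F$.
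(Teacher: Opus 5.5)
Your proof is correct and follows the paper's argument. For $\beta=\infty$ you take ${\uparrow}U$, using that $\ll_\infty$ coincides with $\leq$. For finite $\alpha$ you build the interpolating sequence $U \ll_\beta \cdots \ll_\beta V_1 \ll_\beta V$ and take the union $\bigcup_n {\uparrow}V_n$, exactly as the paper does following Lawson, while spelling out details the paper leaves implicit, such as the $\beta$-openness check via Lemma~\ref{lem:charact-open}.
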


\begin{proof*}{}
	Suppose that $α$ contains only finite sets. Suppose that $U ≪_β V$. We apply inductively that $≪_β$ is interpolative to obtain $U ≪_β ⋯ ≪_β W_2 ≪_β W_1 ≪_β V$. Then $⋃_i {↑}W_i = ⋃_i {↟}_βW_i$ is a $β$-open $ω$-filter, hence a $β$-open $α$-filter.
	
	Suppose that $β=∞$. Then $≪_β$ coincides with the order $≤$ and ${↑}U$ is a $β$-open $α$-filter for all $U$.
\end{proof*}

\paragraph{$(α,β)$-intermediate structures} We now characterize the polarities $(\Oc,\pt^\op_{αβ}(\Oc))$ where $\Oc$ has enough $(α,β)$-points.

\begin{dfn}{}{}
	An \emph{$(α,β)$-intermediate structure} is (the intermediate structure of) a polarity $(\Oc,\Kc)$ satisfying the following axioms:
	\begin{enumerate}[label=(IS\arabic*)]
		✦ The polarity is separating. \label{ax:IS-sep}
		✦ $\Oc$ has $β$-filtered suprema and $\Kc$ has $α$-filtered infima. \label{ax:filt-sup-inf}
		✦ For all $A ∈ \Kc$, the set $\setst{U∈\Oc}{A≤U}$ is a $β$-open $α$-filter. \label{ax:IS-K}
		✦ For all $U ∈ \Oc$, the set $\setst{A∈\Kc}{A≤U}$ is an $α$-co-open $β$-ideal. \label{ax:IS-O}
		✦ Every $A ≤ U$ can be completed to $A ≤ V ≤ B ≤ U$. \label{ax:IS-inter}
	\end{enumerate}
\end{dfn}

\begin{prop}{}{inter-to-cont}
	If $(\Oc,\Kc)$ is an $(α,β)$-intermediate structure, then:
	\begin{itemize}
		✦ $\Oc$ is $β$-continuous and $\Kc$ is $α$-continuous.
		✦ $U ≪_β V ⇔ ∃A : U≤A≤V$ and $A ≪_α B ⇔ ∃U : A≤U≤B$. 
		✦ $\Oc ≃ \pt_{βα}(\Kc^\op)$ and $\Kc^\op ≃ \pt_{αβ}(\Oc)$.
	\end{itemize}
\end{prop}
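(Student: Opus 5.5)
By the symmetry of the axioms (swapping $\Oc$ with $\Kc^\op$ and $α$ with $β$ exchanges \ref{ax:IS-K} with \ref{ax:IS-O} and leaves \ref{ax:IS-sep}, \ref{ax:filt-sup-inf}, \ref{ax:IS-inter} invariant), it suffices to prove the statements about $\Oc$. The dual ones then follow. The plan is to first identify $≪_β$ on $\Oc$, then deduce continuity, and finally identify the points.

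\emph{Step 1: $U≤A≤V$ implies $U≪_β V$.} Suppose $U≤A≤V$ and $V≤\fjoin{β}_i W_i$. By transitivity of the polarity order, $A ≤ \fjoin{β}_i W_i$, computed in $\Oc$. By \ref{ax:IS-K}, the set $\setst{W}{A≤W}$ is $β$-open, so $A≤W_i$ for some $i$, and hence $U≤A≤W_i$. By \ref{ax:IS-sep}, $U≤W_i$ holds in $\Oc$ itself.

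\emph{Step 2: every $V$ is the $β$-filtered supremum of $D_V=\setst{U}{∃A: U≤A≤V}$.} First, $D_V$ is a down-set. It is $β$-filtered: given a $β$-small family $U_j≤A_j≤V$, the set $\setst{A}{A≤V}$ is a $β$-ideal by \ref{ax:IS-O}, so there is $B≥A_j$ for all $j$ with $B≤V$. Applying \ref{ax:IS-inter} to $B≤V$ gives $B≤W≤C≤V$, and then $W≥U_j$ for all $j$ with $W∈D_V$. Let $S=\fjoin{β}D_V$, which exists by \ref{ax:filt-sup-inf}; clearly $S≤V$.

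For the reverse inequality, by separation it suffices to show that every $A≤V$ satisfies $A≤S$. Given $A≤V$, apply \ref{ax:IS-inter} twice to get $A≤W≤B≤V$ and then $B≤W'≤C≤V$. Then $W'∈D_V$, so $A≤W≤B≤W'≤S$.

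Combining Steps 1 and 2: every element of $D_V$ is way below $V$. Conversely, if $U≪_βV$, then $U$ lies below some element of $D_V$, since $D_V$ is a $β$-filtered set with supremum $V$, and $D_V$ is a down-set, so $U∈D_V$. Hence $D_V={↡}_βV$, and this set is a $β$-ideal with supremum $V$. This gives both $β$-continuity and the characterization $U≪_βV ⇔ ∃A: U≤A≤V$. The main subtlety is using $β$-openness from \ref{ax:IS-K} correctly, together with separation, whenever elements are compared across the two sides.

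\emph{Step 3: $\Kc^\op≃\pt_{αβ}(\Oc)$.} Consider the map $A↦\setst{U}{A≤U}$. It lands in $\pt_{αβ}(\Oc)$ by \ref{ax:IS-K}. It is an order-embedding by separation, and it reverses order, so it is an embedding of $\Kc^\op$.

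The main obstacle is surjectivity. Let $F$ be a $β$-open $α$-filter and set $X=\setst{A}{∃U∈F: A≤U}$. For each $U∈F$, $β$-openness and Step 2 give $U'∈F$ with $U'≪_βU$, hence $U'≤A≤U$ for some $A$. Interpolating via \ref{ax:IS-inter} shows that $X$ is $α$-filtered, using that $F$ is an $α$-filter and that each set $\setst{A}{A≤U}$ is a $β$-ideal. Let $A_F=\fmeet{α}X$, which exists by \ref{ax:filt-sup-inf}. If $U∈F$, pick $A∈X$ with $A≤U$; then $A_F≤A≤U$. Conversely, suppose $A_F≤U$. Interpolate $A_F≤V≤B≤U$ by \ref{ax:IS-inter}. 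Since $\setst{A}{A≤V}$ is $α$-co-open by \ref{ax:IS-O}, the $α$-filtered infimum $A_F$ lying in it forces some $A∈X$ with $A≤V$. Then $A≤W$ for some $W∈F$, so $F∋W$ and, through $A≤V≤B≤U$ and the way $X$ was built, $U∈F$. This last step needs care: I would arrange $X$ as $\setst{A}{∃U'∈F: U'≤A}$ so that $U'≤A≤V≤U$ gives $U∈F$ directly, since $F$ is an up-set. Hence $F=\setst{U}{A_F≤U}$, which proves surjectivity.
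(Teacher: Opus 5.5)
Your proof is correct and is essentially the paper's argument, run on the $\Oc$ side where the paper works on the $\Kc$ side. Steps 1 and 2 are the duals of the paper's proof that $A\leq U\leq B$ implies $A\ll_\alpha B$ and that $A$ is the $\alpha$-filtered infimum of such $B$, and Step 3 is the paper's surjectivity argument.

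Step 3 only goes through with your corrected set $X=\{A : \exists U'\in F,\ U'\leq A\}$, which is exactly the paper's $F^*$. Your first version, a down-set of $\Kc$, need not be $\alpha$-filtered, and its infimum generally does not recover $F$. With the corrected $X$, the $\alpha$-filteredness comes from three ingredients: $F$ is an $\alpha$-filter, $F$ is $\beta$-open, and your Step 2 characterization of $\ll_\beta$. The $\beta$-ideal part of \ref{ax:IS-O} plays no role there.
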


In a way, $\Kc$ witnesses the $β$-continuity of $\Oc$ and, conversely, $\Oc$ witnesses the $α$-continuity of $\Kc^\op$.

\begin{proof*}{}
	We only show the statements concerning $\Kc$.
	
	We show that if $A ≤ U ≤ B$ then $A ≪_α B$. Suppose that $A≤U≤B$ and that $\fmeet{α}_i A_i ≤ A$. Then $\fmeet{α}_i A_i ≤ U$ and by \ref{ax:IS-O} there is $i$ such that $A_i ≤ U ≤ B$. We now show that $A = \fmeet{α} \setst{B}{∃U : A≤U≤B}$ in $\Kc$. By definition of the polarity order, we have $A = ⋀\setst{U}{A≤U}$. Thus
	\[ A = ⋀\setst{B}{∃V,U : A≤V≤B≤U} = ⋀\setst{B}{∃V : A≤V≤B} \text{.} \]
	We show that this infimum is $α$-filtered. If $(A≤V_i≤B_i)_{i∈I}$ where $I$ is $α$-small then, by \ref{ax:IS-K}, there is $V$ such that $A≤V$ and $∀i : V≤V_i$. By \ref{ax:IS-inter}, there are $W$ and $B$ such that $A≤W≤B≤V$. Thus the infimum is $α$-filtered. Since moreover $A≤V≤B ⇒ A≪_αB$, this shows that $\Kc$ is $α$-continuous and that $A≪_αB ⇔ ∃U : A≤U≤B$. Dually, $\Oc$ is $β$-continuous and the dual characterization of $≪_β$ holds.
	
	Finally, we show that $\Kc^\op ≃ \pt_{αβ}(\Oc)$. By \ref{ax:IS-K}, there is a map $\Kc^\op → \pt_{αβ}(\Oc)$ sending $A$ to $\setst{U}{A≤U}$. It is an order-embedding by \ref{ax:IS-sep}. We show it is surjective. Let $F ⊆ \Oc$ be a $β$-open $α$-filter. Let $F^* ≔ \setst{B}{∃U∈F : U≤B}$. We show that $F^*$ is $α$-filtered. If $(U_i ≤ B_i)_{i∈I}$ for some $α$-small $I$, then since $F$ is an $α$-filter there is $U$ such that $∀i : U≤U_i≤B_i$. Since $F$ is $β$-open, there is $U'∈F$ such that $U'≪_βU$. By the characterization of $≪_β$, we get $B$ with $U'≤B≤U$ so that $B ∈ F^*$ and $B≤B_i$ for all $i$. We have shown that $F^*$ is $α$-filtered. Let $A ≔ \fmeet{α} F^* ∈ \Kc$. We show that $U ∈ F ⇔ A≤U$. If $U∈F$, then there is $F∋U'≪_βU$ hence $U'≤B≤U$ and $A = \fmeet{α} F^* ≤ B ≤ U$. Reciprocally, suppose that $A≤U$. Then by \ref{ax:IS-O} there is $U'≤B≤U$ with $U' ∈ F$, hence $U ∈ F$.
\end{proof*}

\begin{prop}{}{cont-to-inter}
	If $\Oc$ is $β$-continuous and has enough $(α,β)$-points, then $(\Oc,\pt_{αβ}^\op(\Oc))$ is an $(α,β)$-intermediate structure.
\end{prop}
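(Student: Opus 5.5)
We verify the axioms (IS1)--(IS5) in turn for $\Kc = \pt^\op_{αβ}(\Oc)$. Axiom \ref{ax:IS-sep} is Lemma~\ref{lem:enough-points-impl-sep}. The key tool throughout is the characterization of the polarity order $U≤A$ available thanks to enough points: by Lemma~\ref{lem:enough-points-impl-sep}, $U≤A$ means $A ⊆ {↑}U$ as filters. Combined with Definition~\ref{dfn:enough-pts} and its converse, this gives $U≪_βV ⇔ ∃A : U≤A≤V$, that is, $V∈A⊆{↑}U$.

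For \ref{ax:filt-sup-inf}: $\Oc$ has $β$-filtered suprema by $β$-continuity. For $\Kc$, an $α$-filtered infimum in $\Kc$ is an $α$-filtered (directed-for-inclusion in the $α$ sense) union of $β$-open $α$-filters. Such a union is still an up-set and $β$-open, since unions of $β$-open sets are $β$-open. It is still an $α$-filter because any $α$-small family of members lies in a common filter of the family, by $α$-filteredness of the index. The union is the supremum in $\pt_{αβ}(\Oc)$, hence the required infimum in $\Kc$.

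Axiom \ref{ax:IS-K} is immediate, since $\setst{U}{A≤U}$ is $A$ itself. For \ref{ax:IS-O}, the set $D_U=\setst{A}{U∈A}$ is a down-set in $\Kc$. It is $α$-co-open, since if $U$ lies in an $α$-filtered union of filters then it lies in one of them. To see it is a $β$-ideal, take an $α$-small... rather a $β$-small family $(A_j)_j$ with $U∈A_j$; we must find $A∈D_U$ contained in all $A_j$, i.e.\ an upper bound in $\Kc$. Since each $A_j$ is $β$-open, pick $U_j∈A_j$ with $U_j≪_βU$. Then, as $≪_β$ is an order-compatible idempotent relation and $β$-small families below $U$ have a common bound (the set ${↡}_βU$ is a $β$-ideal), there is $W$ with $U_j≤W≪_βU$ for all $j$. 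Enough points gives $A$ with $U∈A⊆{↑}W⊆ A_j$, using that each $A_j$ is an up-set containing $U_j≤W$. Nonemptiness, when $∅∈β$, is covered by the same argument with the empty family, using some $W≪_βU$.

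For \ref{ax:IS-inter}, suppose $A≤U$, i.e.\ $U∈A$. By $β$-openness pick $U'∈A$ with $U'≪_βU$, and interpolate $U'≪_βV≪_βU$. We have $A≤U'≤V$, since $U'∈A$ and $A$ is an up-set. From $V≪_βU$, enough points gives $B$ with $V≤B≤U$. This yields $A≤V≤B≤U$.

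The main obstacle is the $β$-ideal part of \ref{ax:IS-O}: it is the one place requiring a careful use of the ideal ${↡}_βU$, and it needs enough points applied to $W≪_βU$ rather than to the $U_j$ directly.
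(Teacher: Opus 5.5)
Your proof is correct and matches the paper's argument axiom by axiom. Separation comes from the enough-points lemma. Filtered unions of $\beta$-open $\alpha$-filters give both (IS2) and the $\alpha$-co-openness in (IS4). For the $\beta$-ideal part of (IS4), you take a common bound in the ideal of elements $\ll_\beta U$ and then apply enough points. The differences are cosmetic: you ask only for $U_j \le W \ll_\beta U$ where the paper asks for $U_j \ll_\beta V$, and the interpolation in your (IS5) is unnecessary, since $A \le U' \ll_\beta U$ already lets you apply enough points to $U'$ directly.
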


\begin{proof*}{}
	Axiom \ref{ax:IS-sep} holds by Lemma~\ref{lem:enough-points-impl-sep}.
	Axiom \ref{ax:filt-sup-inf} says that $\pt_{αβ}(\Oc)$ has $α$-filtered suprema. This holds because an arbitrary union of $β$-open subsets is open, and an $α$-filtered union of $α$-filters is an $α$-filter.
	Axiom \ref{ax:IS-K} holds by definition of $\pt_{αβ}(\Oc)$. Axiom \ref{ax:IS-inter} holds because if $A ≤ U$, then there is $A ≤ V ≪_β U$ and thus $A ≤ V ≤ B ≤ U$ since $\Oc$ has enough $(α,β)$-points.
	Finally, we check \ref{ax:IS-O}: Let $U ∈ \Oc$ and let $I ≔ \setst{A∈\pt_{αβ}^\op(\Oc)}{A≤U}$. As we saw above, $α$-filtered suprema in $\pt_{αβ}(\Oc)$ are computed as unions, hence $I$ is $α$-co-open. We show that it is a $β$-ideal: Suppose that $(A_i ≤ U)_i$ is a $β$-small family. For each $i$, let $A_i ≤ U_i ≪_β U$. Since $\Oc$ is $β$-continuous, there is $V ≪_β U$ such that $∀i : U_i ≪_β V$. Since $\Oc$ has enough $(α,β)$-points, we get $V ≤ A ≤ U$. Thus $I$ is a $β$-ideal.
\end{proof*}

\begin{cor}{}{}
	Let $\Oc$ be a $β$-continuous lattice with enough $(α,β)$-points. Then the evaluation map
	\[ \Oc \longrightarrow \pt_{βα}(\pt_{αβ}(\Oc)) \]
	is an isomorphism.
\end{cor}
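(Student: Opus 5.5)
The corollary follows by composing Proposition~\ref{prop:cont-to-inter} with Proposition~\ref{prop:inter-to-cont}, so the plan is to run these two in sequence.

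First, since $\Oc$ is $β$-continuous with enough $(α,β)$-points, Proposition~\ref{prop:cont-to-inter} shows that $(\Oc,\Kc)$ with $\Kc = \pt_{αβ}^\op(\Oc)$ is an $(α,β)$-intermediate structure. Applying the third item of Proposition~\ref{prop:inter-to-cont}, in its dual form (the one not written out in the proof), gives an isomorphism $\Oc ≃ \pt_{βα}(\Kc^\op) = \pt_{βα}(\pt_{αβ}(\Oc))$.

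Second, I must check that this isomorphism is the evaluation map. By the symmetry of the proof of Proposition~\ref{prop:inter-to-cont}, the dual map sends $U$ to $\setst{A∈\Kc}{A≤U}$, viewed as a subset of $\Kc^\op = \pt_{αβ}(\Oc)$. Since $A≤U$ means $U∈A$, this set is $\setst{F∈\pt_{αβ}(\Oc)}{U∈F}$, which is exactly the evaluation at $U$. So the isomorphism given by Proposition~\ref{prop:inter-to-cont} is the evaluation map.

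The main obstacle is bookkeeping. The dual statement $\Oc≃\pt_{βα}(\Kc^\op)$ uses \ref{ax:IS-O} in place of \ref{ax:IS-K}, and one must check that the ladder roles swap correctly. In $\Kc^\op$, $β$-ideals of $\Kc$ become $β$-filters, and $α$-co-open sets of $\Kc$ become $α$-open sets of $\Kc^\op$, so these are indeed $(β,α)$-points of the $α$-continuous poset $\Kc^\op$. If the dual half of Proposition~\ref{prop:inter-to-cont} needs spelling out, I would mirror the given surjectivity argument. Given an $α$-open $β$-filter $G$ of $\Kc^\op$, set $G^* = \setst{V}{∃A∈G : V≤A}$. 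This set is $β$-filtered upward in $\Oc$ by \ref{ax:IS-inter} and the characterization of $≪_α$. Its $β$-filtered supremum $U$ then satisfies $A∈G ⇔ A≤U$.

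The corollary says "lattice", but nothing beyond $β$-continuity and enough points is used.
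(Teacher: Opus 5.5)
Your proposal is correct and takes the same approach as the paper. The paper states this corollary without a separate proof, as an immediate consequence of Proposition~\ref{prop:cont-to-inter} followed by the dual form of the third item of Proposition~\ref{prop:inter-to-cont}. Your check that this isomorphism is the evaluation map $U \mapsto \{F : U \in F\}$ is accurate, as is your remark that the lattice hypothesis is never used.
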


In this situation, we say that $\pt_{αβ}(\Oc)$ is the \emph{$(α,β)$-dual} of $\Oc$.

\subsection{Basic $(α,β)$-duality}

\begin{dfn}{}{}
	An \emph{$(α,β)$-flat map} $f : \Oc_1 → \Oc_2$ between two $β$-continuous posets is a poset map whose inverse image $f^{-1}$ preserves $β$-open $α$-filters.
\end{dfn}

We denote by $\cFlat_α\cCont_β$ the category of $β$-continuous posets with enough $(α,β)$-points, and $(α,β)$-flat maps. It is not obvious that the notion of $(α,β)$-flatness makes sense in general. It is the minimal condition necessary to obtain a dual map $\pt_{αβ}(\Oc_2) → \pt_{αβ}(\Oc_1)$. We illustrate this with two special cases:
\begin{itemize}
	✦ $f$ is $(0,β)$-flat exactly when it preserves $β$-filtered suprema. This generalizes the fact that the Scott-continuous maps are those preserving filtered suprema.
	✦ $f$ is $(α,∞)$-flat exactly when it is representably $α$-flat in the categorical sense.
\end{itemize}

We will see in Proposition~\ref{prop:ex-full-flat} that, under the assumption that $\Oc_1$ has $α$-meets and that $\Oc_2$ has enough $(α,β)$-points, the $(α,β)$-flat maps coincide with the poset maps preserving $α$-meets and $β$-filtered suprema. 
More generally, when we just suppose that $\Oc_2$ has enough $(α,β)$-points, $(α,β)$-flatness can be understood as a sort of $≪_β$-relaxed representable $α$-flatness (Remark~\ref{rmk:charact-flat}).

\begin{dfn}{}{}
	A \emph{morphism} $f : (\Oc_1,\Kc_1) → (\Oc_2,\Kc_2)$ between two $(α,β)$-intermediate structures is an adjoint pair
	\[\begin{tikzcd}[row sep=0.6em]
		\Oc_1 \ar[r,"f^*",""{below,name=A}] & \Oc_2 \\
		\Kc_1 & \Kc_2 \ar[l,"f_!",""{above,name=B}]
		\ar[from=A,to=B,"⊢"{description,sloped},phantom]
	\end{tikzcd}\]
	in the sense that $f_!(A) ≤ U ⇔ A ≤ f^*(U)$.
\end{dfn}

We denote by $\cInter_{αβ}$ the category of $(α,β)$-intermediate structures. It bears an obvious self-duality that swaps $\Oc$ and $\Kc$ while reversing the order.

\begin{exmps}{}{}~
	\begin{itemize}
		✦ With $α=β=ω$, suppose that $(\Oc_1,\Kc_1)$ and $(\Oc_2,\Kc_2)$ are obtained from compact ordered spaces $X_1$ and $X_2$ as in Example~\ref{exmp:KOrd}. If $f^*$ is the inverse image along a morphism $f : X_2→X_1$, then the map $f_!$ sends a closed up-set $A ⊆ X_2$ to ${↑}f[A]$.
		✦ When $α=β=∞$, an $∞$-continuous poset $\Oc$ is just a poset and $\pt_{∞∞}(\Oc)$ is the poset of principal up-sets of $\Oc$, i.e., $\Oc^\op$. In this case $\Oc = \Kc$ and a morphism of $(∞,∞)$-intermediate structures is just an adjoint pair. When $\Oc$ is complete, $f^*$ is an inf-preserving map and $f_!$ is its right adjoint.
	\end{itemize}
\end{exmps}

\begin{thm}{}{FlatCont-dual}
	\[ \cFlat_α\cCont_β ≃ \cInter_{αβ} ≃ \cInter_{βα}^\op ≃ (\cFlat_β\cCont_α)^\op \]
\end{thm}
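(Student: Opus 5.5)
The plan is to prove the two outer equivalences separately. The middle one, $\cInter_{αβ} ≃ \cInter_{βα}^\op$, is the obvious self-duality that swaps $\Oc$ and $\Kc$ and reverses the order. So it suffices to construct $\cFlat_α\cCont_β ≃ \cInter_{αβ}$; the last equivalence is the same statement with $α$ and $β$ exchanged, followed by taking opposites.

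\textbf{The functors.} Define $Φ : \cFlat_α\cCont_β → \cInter_{αβ}$ on objects by $\Oc ↦ (\Oc,\pt^\op_{αβ}(\Oc))$, which is an intermediate structure by Proposition~\ref{prop:cont-to-inter}. An $(α,β)$-flat map $f : \Oc_1→\Oc_2$ is sent to the pair $f^* ≔ f$ and $f_! ≔ f^{-1} : \pt_{αβ}(\Oc_2) → \pt_{αβ}(\Oc_1)$. The latter is well defined exactly by flatness and is order-preserving. The adjunction is immediate: $f_!(A) ≤ U ⇔ U ∈ f^{-1}(A) ⇔ f(U) ∈ A ⇔ A ≤ f^*(U)$. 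Functoriality is clear.

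In the other direction, $Ψ : (\Oc,\Kc) ↦ \Oc$ forgets $\Kc$. By Proposition~\ref{prop:inter-to-cont}, $\Oc$ is $β$-continuous and $\Kc^\op ≃ \pt_{αβ}(\Oc)$ via $A ↦ \setst{U}{A≤U}$. Enough points follows because $U≪_βV ⇔ ∃A : U≤A≤V$, and separation gives $U ≤^\oo A$. A morphism $(f^*,f_!)$ is sent to $f^*$. This map is order-preserving, since $U≤V$ in $\Oc_1$ gives, for any $A$ with $A≤f^*U$, that $f_!A≤U≤V$, hence $A≤f^*V$; separation then gives $f^*U≤f^*V$. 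It is also flat: under the identification above, $(f^*)^{-1}$ sends the point $\setst{U}{A≤U}$ to $\setst{U}{A≤f^*U} = \setst{U}{f_!A≤U}$, which is again a point.

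\textbf{Checking the equivalence.} $ΨΦ = \mathrm{id}$ on the nose. For $ΦΨ ≅ \mathrm{id}$, the natural isomorphism is the identity on $\Oc$ together with the isomorphism $\Kc ≅ \pt^\op_{αβ}(\Oc)$ of Proposition~\ref{prop:inter-to-cont}. This pair respects the polarity by construction, so it is an isomorphism of intermediate structures; naturality was just computed. It remains to see that $Ψ$ is faithful, i.e.\ that a morphism is determined by $f^*$. This holds because $f_!(A)$ is determined as the element of $\Kc_1$ whose upper set in $\Oc_1$ is $\setst{U}{A≤f^*U}$, and separation (IS1) on $\Kc_1$ makes it unique.

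\textbf{Main obstacle.} The only delicate point is the well-definedness of $Ψ$: showing that $f^*$ is order-preserving for the original order, which relies on separation, and that the preimage of a point is again a point. Both are handled above via the identification $\Kc^\op ≃ \pt_{αβ}(\Oc)$. Beyond that, the argument is bookkeeping.
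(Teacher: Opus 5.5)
Your proposal is correct and is essentially the paper's proof written out in full. Objects correspond via Propositions~\ref{prop:inter-to-cont} and~\ref{prop:cont-to-inter}; on arrows, $(\alpha,\beta)$-flatness of $f^*$ is by definition the existence of $f_!$ with $f_!(A) \le U \Leftrightarrow A \le f^*(U)$; and the middle equivalence swaps $\Oc$ and $\Kc$. You add the details the paper leaves implicit (monotonicity of $f^*$ via separation, and enough points from $U \ll_\beta V \Leftrightarrow \exists A : U \le A \le V$). Your separate faithfulness check is harmless but redundant, since $\Psi\Phi = \mathrm{id}$ and the natural isomorphism $\Phi\Psi \cong \mathrm{id}$ already give the equivalence.
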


\begin{proof*}{}
	Propositions~\ref{prop:inter-to-cont} and \ref{prop:cont-to-inter} tell us that the objects of the two categories $\cFlat_α\cCont_β$ and $\cInter_{αβ}$ correspond to each other. The equivalence at the level of arrows is tautological: By definition, a map $f^* : \Oc_1 → \Oc_2$ is $(α,β)$-flat iff there is a map $f_! : \pt_{αβ}^\op(\Oc_2) → \pt_{αβ}^\op(\Oc_1)$ such that $\setst{U}{f_!(A)≤U} = (f^*)^{-1}(\setst{V}{A≤V})$, i.e., such that $f_!(A) ≤ U ⇔ A ≤ f^*(U)$.
\end{proof*}

\begin{prop}{}{flat-impl-cont}
	If $f^* : \Oc_1 → \Oc_2$ is $(α,β)$-flat and if $\Oc_2$ has enough $(α,β)$-points, then $f^*$ preserves $β$-filtered suprema.
\end{prop}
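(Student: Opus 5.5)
The plan is to test the inequality $f^*(U) ≤ ⋁ f^*(U_i)$ against the elements way below $f^*(U)$. The reverse inequality is free from monotonicity. Enough points turns each such element into a point of $\Oc_2$, and flatness pulls that point back to a $β$-open $α$-filter of $\Oc_1$.

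Let $U = \fjoin{β}_{i∈I} U_i$ be a $β$-filtered supremum in $\Oc_1$. Since $f^*$ is order-preserving, $(f^*(U_i))_{i}$ is again a $β$-filtered family in $\Oc_2$. Its supremum $V ≔ \fjoin{β}_i f^*(U_i)$ exists because $\Oc_2$ is $β$-continuous, so in particular has $β$-filtered suprema. Monotonicity gives $V ≤ f^*(U)$, so it remains to show $f^*(U) ≤ V$. By $β$-continuity of $\Oc_2$ we have $f^*(U) = ⋁\setst{W}{W ≪_β f^*(U)}$. It therefore suffices to show $W ≤ V$ for every $W ≪_β f^*(U)$.

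Fix $W ≪_β f^*(U)$. Since $\Oc_2$ has enough $(α,β)$-points (Definition~\ref{dfn:enough-pts}), there is $A ∈ \Kc_2$ with $W ≤ A ≤ f^*(U)$. By Lemma~\ref{lem:enough-points-impl-sep} the polarity is separating, so we may read all these inequalities in the polarity order without $≤^\oo$. Consider the $β$-open $α$-filter $F_A ≔ \setst{V'∈\Oc_2}{A ≤ V'}$. By $(α,β)$-flatness, $(f^*)^{-1}(F_A)$ is a $β$-open $α$-filter of $\Oc_1$, and it contains $U$. Being $β$-open means its complement is closed under $β$-filtered suprema. Since $U = \fjoin{β}_i U_i$ lies in it, some $U_i$ must lie in it as well, so $A ≤ f^*(U_i) ≤ V$.

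Hence $W ≤ A ≤ V$ in the polarity preorder on $\Oc_2 ∪ \Kc_2$. Transitivity of this preorder gives $W ≤ V$ there, and separation (Lemma~\ref{lem:enough-points-impl-sep}) says this is the original order of $\Oc_2$. This concludes the argument.

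The only delicate point is this last step: it uses that the polarity order is a genuine preorder, so that $W ≤ A$ and $A ≤ V$ compose, and that it agrees with the original order on $\Oc_2$. Both come directly from enough points via Lemma~\ref{lem:enough-points-impl-sep}. This is exactly why that hypothesis appears on $\Oc_2$ and not on $\Oc_1$.
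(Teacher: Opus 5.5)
Your proof is correct and is essentially the paper's argument. The paper also pulls back the point $\{V : A \leq V\}$ along $f^*$, phrased through the dual map as $f_!(A) \leq \fjoin{\beta}_i U_i \Leftrightarrow \exists i : f_!(A) \leq U_i$, and tests directly against all $A \in \Kc_2$ before concluding by separation; your detour through the approximants $W \ll_\beta f^*(U)$ just inlines the proof of Lemma~\ref{lem:enough-points-impl-sep}, and your final step is even less delicate than you say, since $W \leq^\oo A$ and $A \leq V$ give $W \leq V$ directly from the definition of $\leq^\oo$.
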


\begin{proof*}{}
	\begin{align*}
		A ≤ f^*\p[\Big]{\fjoin{β}_i U_i} &⇔ f_!(A) ≤ \fjoin{β}_i U_i \\
		&⇔ ∃i : f_!(A) ≤ U_i \\
		&⇔ ∃i : A ≤ f^*(U_i) \\
		&⇔ A ≤ \fjoin{β}_i f^*(U_i) \qedhere
	\end{align*}
\end{proof*}

\begin{rmk}{}{}
	The proof of Theorem~\ref{thm:FlatCont-dual} is similar to the standard proof showing that left adjoints preserve colimits. See also the proof of Proposition~\ref{prop:ex-full-flat}, where the preservation of $α$-infima is treated similarly.
\end{rmk}

\begin{rmk}{}{charact-flat}
	If $\Oc_2$ has enough $(α,β)$-points, then a poset map $f^* : \Oc_1 → \Oc_2$ is $(α,β)$-flat if and only if it preserves $β$-filtered suprema and:
	\begin{center}
		For all $α$-small $(U_i)_i ⊆ \Oc_1$ and for all $V,V' ∈ \Oc_2$,\\
		if $∀i : V ≤ f^*(U_i)$ and $V' ≪_β V$, then\\
		there is $U ∈ \Oc_1$ such that $∀i : U ≤ U_i$ and $V' ≤ f^*(U)$.
	\end{center}
	\[\begin{tikzcd}
		U_i \ar[rr,mapsto] &[2em] &[-2em] f^*(U_i) \\[-1.5em]
		∃U \ar[r,mapsto,dashed] \ar[u,dashed,-] & f^*(U) \ar[ru,dashed,-] & V \ar[u,-] \\[-2em]
		& & V' \ar[u,"≪"{sloped,description},phantom] \ar[lu,dashed,-]
	\end{tikzcd}\]
\end{rmk}

\subsection{Exactness}

We now replace flatness by exactness. We denote by $\cInf_α\cCont_β$ the category of $α$-complete $β$-continuous posets with enough $(α,β)$-points, and poset maps preserving $α$-meets and $β$-filtered suprema. We will see that it is a full subcategory of $\cFlat_α\cCont_β$ and that $\cInf_α\cCont_β ≃ (\cInf_β\cCont_α)^\op$.

\begin{prop}{}{meet-corresp}
	Let $(\Oc,\Kc)$ be an $(α,β)$-intermediate structure. Then $\Oc$ has $α$-meets iff $\Kc$ has $β$-joins.
\end{prop}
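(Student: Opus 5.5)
By the obvious self-duality of $(α,β)$-intermediate structures (swap $\Oc$ and $\Kc$, reverse the order, exchange $α$ and $β$), it suffices to prove one direction: \emph{if $\Oc$ has $α$-meets, then $\Kc$ has $β$-joins}. The converse is the same statement applied to the $(β,α)$-intermediate structure $(\Kc^\op,\Oc^\op)$.

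The plan is to build the candidate join as a point. Fix a $β$-small family $(A_j)_{j∈J}$ in $\Kc$ and set
\[ F ≔ \setst{U∈\Oc}{∀j : A_j≤U} \text{.} \]
By Proposition~\ref{prop:inter-to-cont} we have $\Kc^\op ≃ \pt_{αβ}(\Oc)$ via $A ↦ \setst{U}{A≤U}$. So if $F$ is a $β$-open $α$-filter, there is a unique $A∈\Kc$ with $A≤U ⇔ U∈F$.

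\textbf{$F$ is an $α$-filter.} It is clearly an up-set. Let $(U_i)_{i∈I}$ be an $α$-small family in $F$ and put $U ≔ ⋀_i U_i$. For each $j$, the set $\setst{V}{A_j≤V}$ is an $α$-filter by \ref{ax:IS-K}, so it contains some $V_j ≤ U_i$ for all $i$. Then $V_j ≤ U$, hence $A_j ≤ U$. When $I$ is empty, this argument shows that the top element (the empty meet) lies in $F$, so $F$ is nonempty whenever $∅∈α$.

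\textbf{$F$ is $β$-open.} This is the step needing the most care. Let $U∈F$. The set $\setst{B}{B≤U}$ is a $β$-ideal by \ref{ax:IS-O} and contains all the $A_j$, so there is $B≤U$ with $A_j≤B$ for all $j$. Applying \ref{ax:IS-inter} to $B≤U$ gives $B≤V≤B'≤U$. Then $V∈F$, since $A_j≤B≤V$. Moreover $V≪_βU$ by the characterization of $≪_β$ in Proposition~\ref{prop:inter-to-cont}, since $V≤B'≤U$. By Lemma~\ref{lem:charact-open}, $F$ is $β$-open.

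\textbf{$A$ is the join.} Since the polarity is separating, $C≤D$ in $\Kc$ iff every $U$ with $D≤U$ satisfies $C≤U$.
\begin{itemize}
\item Each $A_j≤A$: if $A≤U$ then $U∈F$, so $A_j≤U$.
\item $A$ is least: suppose $A_j≤C$ for all $j$. Every $U$ with $C≤U$ then lies in $F$, so $A≤U$. Hence $A≤C$.
\end{itemize}
Thus $A = ⋁_j A_j$ in $\Kc$, which proves the direction and, by duality, the proposition.
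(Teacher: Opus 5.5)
Your proof is correct and is essentially the paper's argument. The join of $(A_j)_j$ is the point $\bigcap_j \setst{U}{A_j \le U}$, which is an $\alpha$-filter because $\Oc$ has $\alpha$-meets, and your explicit check that it is the join in $\Kc$ just unpacks the isomorphism $\Kc^\op \simeq \pt_{\alpha\beta}(\Oc)$. The only difference is how you get $\beta$-openness: you use \ref{ax:IS-O} and \ref{ax:IS-inter}, whereas the paper uses the general fact that a $\beta$-small intersection of $\beta$-open up-sets is $\beta$-open.
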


\begin{proof*}{}
	Suppose that $\Oc$ has $α$-meets. Then $α$-filters are stable under arbitrary intersections. Consequently, $β$-open $α$-filters of $\Oc$ are stable under $β$-small intersections. This shows one half of the proposition and the other one is dual.
\end{proof*}

\begin{dfn}{}{}
	We say that an $(α,β)$-intermediate structure $(\Oc,\Kc)$ is \emph{exact} when $\Oc$ has $α$-meets or, equivalently, when $\Kc$ has $β$-joins.
\end{dfn}

In this case, $\Oc$ is stable under $α$-meets in the intermediate structure, and $\Kc$ is stable under $β$-joins.

\begin{prop}{}{ex-full-flat}
	$\cInf_α\cCont_β$ is a full subcategory of $\cFlat_α\cCont_β$.
\end{prop}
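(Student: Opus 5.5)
Objects of $\cInf_α\cCont_β$ are in particular objects of $\cFlat_α\cCont_β$, so the claim amounts to this. Let $\Oc_1,\Oc_2$ be $α$-complete $β$-continuous posets with enough $(α,β)$-points. A poset map $f^* : \Oc_1 → \Oc_2$ is $(α,β)$-flat if and only if it preserves $α$-meets and $β$-filtered suprema. I prove the two directions separately, working throughout in the intermediate structures $(\Oc_i,\Kc_i)$ supplied by Proposition~\ref{prop:cont-to-inter}.

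\emph{Flat implies exact.} Suppose $f^*$ is $(α,β)$-flat, with adjoint $f_!$ as in the proof of Theorem~\ref{thm:FlatCont-dual}. Proposition~\ref{prop:flat-impl-cont} already gives preservation of $β$-filtered suprema. For $α$-meets, I copy the same computation. Let $(U_i)_{i∈I}$ be $α$-small. Since $\setst{U}{f_!(A)≤U}$ is an $α$-filter in the $α$-complete poset $\Oc_1$, we have $f_!(A) ≤ \smeet{α}_i U_i ⇔ ∀i: f_!(A)≤U_i$. Hence
\[ A ≤ f^*\p[\Big]{\smeet{α}_i U_i} ⇔ ∀i : A ≤ f^*(U_i) ⇔ A ≤ \smeet{α}_i f^*(U_i), \]
where the last step again uses that $\setst{V}{A≤V}$ is an $α$-filter of $\Oc_2$. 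Since $(\Oc_2,\Kc_2)$ is separating (Lemma~\ref{lem:enough-points-impl-sep}), elements of $\Oc_2$ are determined by the $A$ below them. This gives $f^*(\smeet{α}_i U_i) = \smeet{α}_i f^*(U_i)$; the inequality $≤$ is in any case automatic from monotonicity.

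\emph{Exact implies flat.} Suppose $f^*$ preserves $α$-meets and $β$-filtered suprema, and let $F ⊆ \Oc_2$ be a $β$-open $α$-filter. I check that $(f^*)^{-1}(F)$ is one as well.
\begin{itemize}
\item It is an up-set because $f^*$ is monotone.
\item It is closed under $α$-meets because $f^*$ preserves them and $F$ is closed under $α$-meets. This includes the empty meet, i.e.\ the top element, when $∅∈α$; in the $α$-complete setting, being an $α$-filter is exactly being an up-set closed under $α$-meets, so this case is covered.
\item It is $β$-open because the preimage of a $β$-open set under a map preserving $β$-filtered suprema is $β$-open: the complement is a down-set closed under $β$-filtered suprema.
\end{itemize}

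I expect no real obstacle; the only delicate point is the interaction of $α$-filters with $α$-meets in the first direction. There I rely on the fact that in an $α$-complete poset an $α$-filter is closed under $α$-meets, since any lower bound in the filter lies below the meet. Finally, composition and identities agree in both categories, so the subcategory is full.
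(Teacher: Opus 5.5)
Your proof is correct and follows the paper's argument. Exact maps are flat because preimages under them preserve $\beta$-open sets and $\alpha$-filters independently; flat maps preserve $\beta$-filtered suprema by Proposition~\ref{prop:flat-impl-cont}, and $\alpha$-meets by the same chain of equivalences through $f_!(A) \leq U \Leftrightarrow A \leq f^*(U)$. You only make explicit what the paper leaves implicit: $\alpha$-filters in an $\alpha$-complete poset are closed under $\alpha$-meets, and separation of $(\Oc_2,\Kc_2)$ turns the chain into an equality.
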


\begin{proof*}{}
	Let $\Oc_1 ∈ \cInf_α\cCont_β$ and let $\Oc_2 ∈ \cFlat_α\cCont_β$. If $f : \Oc_1 → \Oc_2$ preserves $β$-filtered suprema and $α$-infima, then $f^{-1}$ preserves $β$-open $α$-filters because it preserves both $β$-open subsets and $α$-filters, independently. Reciprocally, let $f^* : \Oc_1 → \Oc_2$ be an $(α,β)$-flat map, with dual $f_! : \Kc_2 → \Kc_1$. By Proposition~\ref{prop:flat-impl-cont}, $f^*$ preserves $β$-filtered suprema. We show that it also preserves $α$-infima:
	\begin{align*}
		A ≤ f^*\p[\Big]{\smeet{α}_i U_i} &⇔ f_!(A) ≤ \smeet{α}_i U_i\\
		&⇔ ∀i : f_!(A) ≤ U_i\\
		&⇔ ∀i : A ≤ f^*(U_i)\\
		&⇔ A ≤ \smeet{α}_i f^*(U_i) \qedhere
	\end{align*}
\end{proof*}

\begin{thm}{}{LexCont-dual}
	$\cInf_α\cCont_β ≃ (\cInf_β\cCont_α)^\op$
\end{thm}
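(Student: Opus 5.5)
The plan is to restrict the equivalence of Theorem~\ref{thm:FlatCont-dual} to full subcategories on both sides. The chain
\[ \cFlat_α\cCont_β ≃ \cInter_{αβ} ≃ \cInter_{βα}^\op ≃ (\cFlat_β\cCont_α)^\op \]
should carry $\cInf_α\cCont_β$ onto $(\cInf_β\cCont_α)^\op$. By Proposition~\ref{prop:ex-full-flat}, $\cInf_α\cCont_β$ is a full subcategory of $\cFlat_α\cCont_β$. Applying the same proposition with $α$ and $β$ swapped, $\cInf_β\cCont_α$ is a full subcategory of $\cFlat_β\cCont_α$. An equivalence of categories restricts to an equivalence between full subcategories as soon as it matches their objects up to isomorphism. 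So it only remains to check that the objects correspond.

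Let $\Oc$ be a $β$-continuous poset with enough $(α,β)$-points, and let $(\Oc,\Kc)$ be its intermediate structure, with $\Kc = \pt^\op_{αβ}(\Oc)$. Under the self-duality $\cInter_{αβ} ≃ \cInter_{βα}^\op$ this becomes the $(β,α)$-intermediate structure $(\Kc^\op,\Oc^\op)$. Its ``open'' part $\Kc^\op ≃ \pt_{αβ}(\Oc)$ is the object of $\cFlat_β\cCont_α$ corresponding to $\Oc$; by Proposition~\ref{prop:inter-to-cont} it is $α$-continuous with enough $(β,α)$-points.

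Now Proposition~\ref{prop:meet-corresp} says that $\Oc$ has $α$-meets iff $\Kc$ has $β$-joins, which is the same as $\Kc^\op$ having $β$-meets. Hence $\Oc ∈ \cInf_α\cCont_β$ exactly when its dual lies in $\cInf_β\cCont_α$, that is, exactly when the intermediate structure is exact. The same reasoning applies in the other direction by symmetry, since exactness is a self-dual condition on intermediate structures.

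The one point to watch is the definition of $\cInf_α\cCont_β$, whose objects are $α$-complete, i.e.\ have $α$-meets. It must be read consistently with Proposition~\ref{prop:meet-corresp}. The intended meaning is existence of $α$-meets, and that proposition converts it into existence of $β$-joins in $\Kc$, i.e.\ $β$-meets in $\Kc^\op$. I expect no genuine obstacle here: the morphism part is entirely handled by Proposition~\ref{prop:ex-full-flat}, so the proof amounts to assembling these earlier results.
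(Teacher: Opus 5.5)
Your proposal is correct and follows the paper's proof. The paper likewise uses Proposition~\ref{prop:meet-corresp} to show that the $(\alpha,\beta)$-dual of an object of $\cInf_\alpha\cCont_\beta$ lies in $\cInf_\beta\cCont_\alpha$, and Proposition~\ref{prop:ex-full-flat} to restrict the duality $\cFlat_\alpha\cCont_\beta \simeq (\cFlat_\beta\cCont_\alpha)^\op$ of Theorem~\ref{thm:FlatCont-dual} to these full subcategories.
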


\begin{proof*}{}
	By Proposition~\ref{prop:meet-corresp}, the $(α,β)$-dual of $\Oc ∈ \cInf_α\cCont_β$ is in $\cInf_β\cCont_α$. By Proposition~\ref{prop:ex-full-flat}, the duality $\cFlat_α\cCont_β ≃ (\cFlat_β\cCont_α)^\op$ restricts to $\cInf_α\cCont_β ≃ (\cInf_β\cCont_α)^\op$.
\end{proof*}

\subsection{Stability}

\begin{dfn}{}{}
	Let $\Oc$ be a $β$-continuous poset with $α$-meets. We say that it is \emph{$α$-stably} $β$-continuous if the map ${↡}_β : \Oc → \Idl_β(\Oc)$ preserves $α$-meets. Equivalently, if $(U ≪_β U_i)_{i∈I}$ for some $α$-small $I$, then $U ≪_β \smeet{α}_i U_i$.
\end{dfn}

If $α ⊆ α'$ and $β ⊆ β'$ are ladders, we let
\[ \cSt_α\cInf_{α'}\cCont_{β'}\cSup_{β} ⊆ \cInf_{α'}\cCont_{β'} \]
be the full sub-category spanned by the $α$-stably $β'$-continuous posets which are $α'$-complete and $β$-cocomplete.

\begin{prop}{}{StCont-corresp}
	Let $(\Oc,\Kc)$ be an exact $(α,β)$-intermediate structure. Let $γ ⊆ α$. Then $\Oc$ is $γ$-stably $β$-continuous if and only if $\Kc$ is $γ$-complete. Moreover, in this case
	\[ \smeet{γ}_i A_i ≤ U ⇔ ∃(A_i≤V_i)_i : \smeet{γ}_iV_i≤U \text{.} \]
\end{prop}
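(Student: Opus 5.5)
The plan is to prove the two implications separately. The construction used for ``stable $⇒$ complete'' will also give the displayed formula for $\smeet{γ}_i A_i$. Throughout I work in the polarity order of the intermediate structure, which is separating by \ref{ax:IS-sep}. I use Lemma~\ref{lem:polarity-sep-inf}: infima existing in $\Kc$ are infima in the presented lattice $L$. I also use the remark after the definition of exactness, that $α$-meets of $\Oc$ are meets in the intermediate structure; since $γ⊆α$, this applies to $γ$-meets. Finally I use the characterizations $U≪_βV ⇔ ∃A : U≤A≤V$ and $\Kc^\op≃\pt_{αβ}(\Oc)$ from Proposition~\ref{prop:inter-to-cont}.

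\emph{($\Kc$ $γ$-complete $⇒$ $\Oc$ $γ$-stable.)} Let $(U≪_βU_i)_{i∈I}$ with $I$ $γ$-small. Choosing witnesses (as the paper does for small index sets) gives $U≤A_i≤U_i$ for each $i$. Put $A≔\smeet{γ}_iA_i$ in $\Kc$. By Lemma~\ref{lem:polarity-sep-inf}, $A$ is also the meet in $L$, so $U≤A$. Since $A≤A_i≤U_i$ for all $i$ and $\smeet{γ}_iU_i$ is the meet in $L$, we get $A≤\smeet{γ}_iU_i$. Hence $U≤A≤\smeet{γ}_iU_i$, i.e.\ $U≪_β\smeet{γ}_iU_i$.

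\emph{($\Oc$ $γ$-stable $⇒$ $\Kc$ $γ$-complete, with the formula.)} Given a $γ$-small family $(A_i)_i$, define
\[ F ≔ \setst{U∈\Oc}{∃(A_i≤V_i)_i : \smeet{γ}_iV_i≤U}. \]
The plan is to show that $F$ is a $β$-open $α$-filter. Then $\Kc^\op≃\pt_{αβ}(\Oc)$ yields $A∈\Kc$ with $A≤U⇔U∈F$, which is exactly the displayed formula once $A$ is identified as the meet.

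\begin{itemize}
\item[] \emph{$F$ is an up-set.} This is clear from the definition.
\item[] \emph{$F$ is $α$-filtered.} Take an $α$-small family $(U_j)_j⊆F$ with witnesses $(V_{ij})$. For each $i$, use \ref{ax:IS-K} ($\setst{V}{A_i≤V}$ is an $α$-filter) to pick $V_i$ with $A_i≤V_i≤V_{ij}$ for all $j$. Then $\smeet{γ}_iV_i≤U_j$ for every $j$. When $J$ is empty this uses non-emptiness of the filters, which is part of the same axiom if $∅∈α$.
\item[] \emph{$F$ is $β$-open.} This is the step where stability is used and the one I expect to be the crux. Let $U∈F$ with witnesses $V_i$. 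By $β$-openness of $\setst{V}{A_i≤V}$, pick $A_i≤V_i'≪_βV_i$. Put $W≔\smeet{γ}_iV_i'$. Then $W∈F$, and $W≤V_i'≪_βV_i$ for each $i$. By $γ$-stability, $W≪_β\smeet{γ}_iV_i≤U$. Lemma~\ref{lem:charact-open} then gives openness.
\end{itemize}

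It remains to check that $A$ is the meet of the $A_i$.
\begin{itemize}
\item[] \emph{Lower bound.} If $A_j≤U$, take $V_j=U$ and arbitrary $V_i$ above $A_i$ for $i≠j$; then $U∈F$, so $A≤U$. Since $U$ was arbitrary, $A≤A_j$ by separation.
\item[] \emph{Greatest lower bound.} If $B≤A_i$ for all $i$ and $U∈F$ with witnesses $V_i$, then $B≤V_i$ for all $i$. Hence $B≤\smeet{γ}_iV_i≤U$, using again that $γ$-meets of $\Oc$ are meets in the intermediate structure. So $B≤A$.
\end{itemize}
Because meets are unique, the formula holds whenever $\Kc$ has $γ$-meets, since $\Oc$ is then stable by the first part.

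The main points to watch are the $β$-openness step and the edge cases: choice over $α$-small index sets, and non-emptiness of filters when $∅∈α$.
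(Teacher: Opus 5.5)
Your argument is essentially the paper's proof. The implication from $\gamma$-completeness of $\Kc$ to stability is the same. For the converse you use the same candidate join, namely the up-set generated by the meets $\bigwedge_i V_i$ with $A_i \le V_i$, and the same stability argument for $\beta$-openness, while writing out the filter and meet checks that the paper only asserts. One step, however, breaks in a case the hypotheses allow, and the paper's own argument shares this blind spot.

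In the lower-bound step you pick ``arbitrary $V_i$ above $A_i$ for $i \neq j$''. This needs every $F_i = \{V : A_i \le V\}$ to be nonempty.
\begin{itemize}
\item When $\emptyset \in \alpha$ this is automatic.
\item When $\emptyset \notin \alpha$ it can fail. The empty set is then vacuously a $\beta$-open $\alpha$-filter, and it is the top element $\top$ of $\Kc$ (the empty $\alpha$-filtered infimum demanded by (IS2)). If some $A_{i_0} = \top$, your $F$ is empty, so $A = \top$, and $A \le A_j$ fails for every $A_j \neq \top$.
\end{itemize}

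In that situation the displayed formula is actually false. Take $\gamma = \alpha = \omega^*$, $\beta = \omega$ and $\Oc = \{0 < 1\}$. All hypotheses hold, and the points are $\emptyset \subseteq \{1\} \subseteq \{0,1\}$. For $A_1 = \emptyset$ and $A_2 = \{1\}$, in $\Kc$ (reverse inclusion) we get $A_1 \wedge A_2 = A_2$, and $A_2 \le 1$ because $1 \in A_2$. Yet no $V_1$ satisfies $A_1 \le V_1$, so the right-hand side fails for $U = 1$.

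The equivalence itself survives. When $\emptyset \notin \alpha$, first discard the indices with $A_i = \top$:
\begin{itemize}
\item if nothing remains, the meet is $\top$;
\item otherwise the remaining index set is a quotient of $I$, hence $\gamma$-small, and your construction applies to it unchanged, since all the relevant $F_i$ are now nonempty.
\end{itemize}
Correspondingly, the formula should quantify only over the $A_i \neq \top$, or be stated under the assumption $\emptyset \in \alpha$.
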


\begin{proof*}{}
	Suppose that $\Oc$ is $γ$-stable. Let $(F_i)_i$ be a $γ$-family of $β$-open $α$-filters of $\Oc$. We claim that its join is the $α$-filter generated by the meets of the form $\smeet{γ}_i U_i$ where $U_i ∈ F_i$ for all $i$. We check it is $β$-open: for every family $(U_i ∈ F_i)_i$, we have another family $(U_i ≫_β V_i ∈ F_i)_i$. Thus $\smeet{γ}_i V_i ≪_β \smeet{γ}_i U_i$ by $γ$-stability of $\Oc$. This shows that $\Kc^\op$ has $γ$-suprema.
	
	For the other direction, suppose that $\Kc$ is $γ$-complete. We show that $\Oc$ is $γ$-stably $β$-continuous. Suppose that $(U ≪_β U_i)_i$ is a $γ$-small family. Then there is $(U ≤ A_i ≤ U_i)_i$. Thus $U ≤ \smeet{γ}_i A_i$ because $\Kc$ has $γ$-meets and by Lemma~\ref{lem:polarity-sep-inf}. Since $γ ⊆ α$, we have $U ≤ \smeet{γ}_i A_i ≤ \smeet{γ}_i U_i$, hence $U ≪_β \smeet{γ}_i U_i$. This shows that $\Oc$ is $γ$-stably $β$-continuous.
\end{proof*}

We deduce the main duality of the paper.

\begin{thm}{}{StLexContSup-dual}
	Let $α ⊆ α'$ and $β ⊆ β'$ be ladders. Then
	\[ \cSt_α\cInf_{α'}\cCont_{β'}\cSup_{β} ≃ (\cSt_{β}\cInf_{β'}\cCont_{α'}\cSup_{α})^\op \]
\end{thm}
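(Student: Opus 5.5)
We restrict the duality of Theorem~\ref{thm:LexContSup-dual}, namely $\cInf_{α'}\cCont_{β'} ≃ (\cInf_{β'}\cCont_{α'})^\op$, to the two full subcategories in the statement. Both sides are defined as full subcategories of these categories. So it suffices to show that the object correspondence $\Oc ↦ \Kc^\op = \pt_{α'β'}(\Oc)$ sends objects of the left-hand side exactly onto objects of the right-hand side. Full faithfulness then comes for free. Work in the exact $(α',β')$-intermediate structure $(\Oc,\Kc)$ provided by Proposition~\ref{prop:cont-to-inter}; the dual object is $\Kc^\op$ with the self-duality of $\cInter$ swapping the roles of $(α',β')$.

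Stability on one side should match cocompleteness on the other. First, $\Oc$ is $α$-stably $β'$-continuous iff $\Kc$ is $α$-complete. This is Proposition~\ref{prop:StCont-corresp} with $γ = α ⊆ α'$, and $α$-completeness of $\Kc$ means $\Kc^\op$ is $α$-cocomplete. Second, $\Oc$ is $β$-cocomplete iff $\Kc^\op$ is $β$-stably $α'$-continuous. This is the same proposition applied to the swapped intermediate structure $(\Kc^\op,\Oc^\op)$, which is an exact $(β',α')$-intermediate structure, with $γ = β ⊆ β'$. Here $\Kc^\op$ plays the role of the ``$\Oc$'' side and has $β'$-meets by exactness (Proposition~\ref{prop:meet-corresp}). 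Completeness and continuity match by construction. $α'$-completeness of $\Oc$ corresponds to $β'$-completeness of $\Kc^\op$ via Proposition~\ref{prop:meet-corresp}, that is, $\Kc$ has $β'$-joins. Continuity and enough points hold on both sides by Theorem~\ref{thm:FlatCont-dual}, since the objects of $\cInter$ correspond on both sides.

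The main obstacle is verifying that the self-duality of intermediate structures really transports the hypotheses of Proposition~\ref{prop:StCont-corresp} correctly, so that it can be applied ``dually'' in the second step. Concretely, we must check that $\Kc^\op$ is $α'$-continuous with $β'$-meets, and that its $≪_{α'}$ relation and its points are the ones coming from $\Oc$. Proposition~\ref{prop:inter-to-cont} gives exactly this: $\Oc^\op ≃ \pt_{β'α'}(\Kc^\op)$ and $A ≪_{α'} B ⇔ ∃U : A≤U≤B$. Once this is in place, the biconditionals above show that the correspondence preserves and reflects each of the four conditions. This gives the stated equivalence.
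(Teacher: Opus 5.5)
Your proposal is correct and is the argument the paper has in mind when it says the theorem is ``deduced'': restrict $\cInf_{\alpha'}\cCont_{\beta'}\simeq(\cInf_{\beta'}\cCont_{\alpha'})^\op$ (Theorem~\ref{thm:LexCont-dual}) to the two full subcategories. Proposition~\ref{prop:meet-corresp} matches $\alpha'$-completeness with $\beta'$-completeness, and Proposition~\ref{prop:StCont-corresp} matches stability with cocompleteness, applied once with $\gamma=\alpha\subseteq\alpha'$ and once through the self-duality of $\cInter$ with $\gamma=\beta\subseteq\beta'$. One cosmetic slip: Proposition~\ref{prop:inter-to-cont} gives $\Oc\simeq\pt_{\beta'\alpha'}(\Kc^\op)$, not $\Oc^\op$, so the swapped structure is $(\Kc^\op,\pt^\op_{\beta'\alpha'}(\Kc^\op))\cong(\Kc^\op,\Oc^\op)$, which is what your argument actually uses.
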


When $α=α'$, we simply write $\cSt_α$ instead of $\cSt_α\cInf_{α'}$. When $β=β'$, we simply write $\cSup_β$ instead of $\cCont_{β'}\cSup_β$. When $α=0$, we omit $\cSt_α$. When $β=0$, we omit $\cSup_β$. For instance, $\cSt_α\cCont_β$ is the category of $α$-stably $β$-continuous posets with enough $(α,β)$-points. The nice thing in this case is that the condition of having enough $(α,β)$-points is trivial, and even constructively so.

\begin{prop}{}{StCont-pointfull}
	Any $α$-stably $β$-continuous poset $\Oc$ has enough $(α,β)$-points.
\end{prop}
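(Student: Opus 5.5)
Given $U ≪_β V$ in $\Oc$, we have to produce a $β$-open $α$-filter $F$ with $V ∈ F ⊆ {↑}U$. The plan is to take the simplest candidate,
\[ F ≔ {↟}_β U = \setst{W ∈ \Oc}{U ≪_β W} \text{,} \]
and check the required properties one at a time. No choice is needed, which matches the remark that the result holds constructively.

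First, the containments. We have $V ∈ F$ by hypothesis. Also $F ⊆ {↑}U$, since $U ≪_β W$ implies $U ≤ W$.

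Second, $F$ is an up-set. This follows from the order-compatibility of $≪_β$.

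Third, $F$ is $β$-open. By Lemma~\ref{lem:charact-open}, it suffices to find, for each $W ∈ F$, some $W' ∈ F$ with $W' ≪_β W$. Since $≪_β$ is idempotent, $U ≪_β W$ factors as $U ≪_β W' ≪_β W$, and this $W'$ works.

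Fourth, and this is the only point where stability enters, $F$ is an $α$-filter. Let $(W_i)_{i∈I}$ be an $α$-small family in $F$, so $U ≪_β W_i$ for all $i$. Because $\Oc$ has $α$-meets, $\smeet{α}_i W_i$ exists. By $α$-stability, $U ≪_β \smeet{α}_i W_i$. Hence this meet lies in $F$ and is a lower bound of the family.

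The empty family, when $∅ ∈ α$, needs a word. There the meet is the top element $⊤$, and stability says $U ≪_β ⊤$, so $F$ is nonempty and contains a lower bound of the empty family. This is covered uniformly by the definition of stability via "${↡}_β$ preserves $α$-meets."

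I expect no real obstacle. The one subtle point is checking that the definition of $α$-stability really covers the empty index set, so that the filter condition is met for all $α$-small families. This holds because ${↡}_β$ preserving the empty meet means ${↡}_β ⊤ = \Oc$.
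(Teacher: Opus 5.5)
Your proof is correct and is the paper's argument: the paper also takes the set $\{W : U \ll_\beta W\}$ and simply asserts that it is a $\beta$-open $\alpha$-filter. You make the routine checks explicit, namely interpolation of $\ll_\beta$ for $\beta$-openness via Lemma~\ref{lem:charact-open}, and $\alpha$-stability (including the empty family) for the $\alpha$-filter property.
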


\begin{proof*}{}
	For all $U ∈ \Oc$, the subset ${↟}_β U$ itself is a $β$-open $α$-filter.
\end{proof*}

\begin{thm}{}{StCont-LexContSup-dual}
	$\cSt_α\cCont_β ≃ (\cInf_β\cSup_α)^\op$
\end{thm}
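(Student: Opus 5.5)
This is the special case $α=α'$, $β=β'$ of Theorem~\ref{thm:StLexContSup-dual}, after the abbreviations are unwound. On the left, $\cSt_α\cInf_α\cCont_β\cSup_0$ is written $\cSt_α\cCont_β$. On the right, $\cSt_0\cInf_β\cCont_α\cSup_α$ is written $\cInf_β\cSup_α$, since $\cSt_0$ is omitted and $α=α'$ lets us write $\cSup_α$ for $\cCont_{α}\cSup_α$. Two points need checking. First, $\cSt_α\cCont_β$ really is the full subcategory of $\cInf_α\cCont_β$ with no residual point condition. Second, the right-hand side is what the theorem says.

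I would also give a direct proof, mirroring the proof of Theorem~\ref{thm:StLexContSup-dual}, to keep the argument self-contained. Start with an object $\Oc$ of $\cSt_α\cCont_β$, an $α$-complete, $α$-stably $β$-continuous poset. Proposition~\ref{prop:StCont-pointfull} shows that $\Oc$ automatically has enough $(α,β)$-points, so $\Oc\in\cInf_α\cCont_β$. By Proposition~\ref{prop:cont-to-inter}, $(\Oc,\Kc)$ with $\Kc=\pt^\op_{αβ}(\Oc)$ is an $(α,β)$-intermediate structure, and it is exact because $\Oc$ has $α$-meets. Proposition~\ref{prop:meet-corresp} then gives that $\Kc$ has $β$-joins. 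Applying Proposition~\ref{prop:StCont-corresp} with $γ=α$, the $α$-stability of $\Oc$ is equivalent to $\Kc$ having $α$-meets. Hence $\Kc^\op$ is $β$-complete, $α$-cocomplete and $α$-continuous (Proposition~\ref{prop:inter-to-cont}). It also has enough $(β,α)$-points, since $(\Kc^\op,\Oc^\op)$ is the dual intermediate structure and Proposition~\ref{prop:inter-to-cont} gives $\Kc^\op\simeq\pt_{βα}(\Oc^\op)$ with $≪_α$ witnessed by elements of $\Oc$. So $\Kc^\op\in\cInf_β\cSup_α$.

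For the converse, take $\Kc^\op$ in $\cInf_β\cSup_α$ and its $(β,α)$-dual $\Oc$. By the same two propositions, read through the self-duality of $\cInter$, $\Oc$ has $α$-meets and is $α$-stably $β$-continuous. It has enough points by Proposition~\ref{prop:StCont-pointfull} (or directly from the intermediate structure). So the object classes correspond under the equivalence of Theorem~\ref{thm:LexCont-dual}.

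Morphisms then need no extra work. Both categories are full subcategories of $\cInf_α\cCont_β$ and $\cInf_β\cCont_α$ respectively, with maps preserving the relevant meets and filtered suprema. The equivalence $\cInf_α\cCont_β\simeq(\cInf_β\cCont_α)^\op$ therefore restricts to these full subcategories.

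The only delicate step is the hypothesis $γ\subseteq α$ in Proposition~\ref{prop:StCont-corresp}. Here $γ=α$, so it holds trivially, but the correspondence is used in both directions (stability on one side, completeness on the other), and both must go through the same exact intermediate structure.
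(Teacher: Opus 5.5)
Your proposal is correct and follows the paper's route: the theorem is the specialization $\alpha'=\alpha$, $\beta=0$, $\beta'=\beta$ of Theorem~\ref{thm:StLexContSup-dual}, with Proposition~\ref{prop:StCont-pointfull} making the point condition on the left automatic. This is exactly your unwinding $\cSt_\alpha\cInf_\alpha\cCont_\beta\cSup_0$ versus $\cSt_0\cInf_\beta\cCont_\alpha\cSup_\alpha$, though your phrase ``$\alpha=\alpha'$, $\beta=\beta'$'' read literally in the main theorem's variables would give Theorem~\ref{thm:StContSup-dual} instead. One small slip in your direct argument: Proposition~\ref{prop:inter-to-cont} gives $\Oc\simeq\pt_{\beta\alpha}(\Kc^\op)$, not $\Kc^\op\simeq\pt_{\beta\alpha}(\Oc^\op)$, and it is this identification together with $A\ll_\alpha B \Leftrightarrow \exists U : A\le U\le B$ that gives $\Kc^\op$ enough $(\beta,\alpha)$-points.
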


\begin{thm}{}{StContSup-dual}
	$\cSt_α\cSup_β ≃ (\cSt_β\cSup_α)^\op$
\end{thm}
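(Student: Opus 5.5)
The plan is to obtain this as the special case $α'=α$, $β'=β$ of Theorem~\ref{thm:StLexContSup-dual}. I will spell out the restriction of the exact duality of Theorem~\ref{thm:LexCont-dual}, since that is where all the content lies.

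\emph{Unpacking the categories.} By the naming conventions, $\cSt_α\cSup_β$ is the full subcategory of $\cInf_α\cCont_β$ on the $α$-stably $β$-continuous posets that are $α$-complete and $β$-cocomplete. By Proposition~\ref{prop:StCont-pointfull}, every such poset automatically has enough $(α,β)$-points, so no extra hypothesis is hidden in the definition. Symmetrically, $\cSt_β\cSup_α$ is the full subcategory of $\cInf_β\cCont_α$ on the $β$-stable, $β$-complete, $α$-cocomplete objects. Both are full subcategories, and Theorem~\ref{thm:LexCont-dual} already gives $\cInf_α\cCont_β ≃ (\cInf_β\cCont_α)^\op$ via $\Oc ↦ \pt_{αβ}(\Oc) = \Kc^\op$. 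It therefore suffices to show that this correspondence matches the two object classes.

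\emph{Matching stability with cocompleteness.} Let $\Oc ∈ \cInf_α\cCont_β$, and let $(\Oc,\Kc)$ be its exact $(α,β)$-intermediate structure.
\begin{enumerate}
	\item Apply Proposition~\ref{prop:StCont-corresp} with $γ=α$. It says that $\Oc$ is $α$-stably $β$-continuous iff $\Kc$ has $α$-meets, i.e., iff the dual $\Kc^\op$ is $α$-cocomplete.
	\item Use the self-duality of $\cInter$, under which $(\Kc^\op,\Oc^\op)$ is an exact $(β,α)$-intermediate structure with $\Kc^\op ∈ \cInf_β\cCont_α$. Apply Proposition~\ref{prop:StCont-corresp} to it with $γ=β$. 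It says that $\Kc^\op$ is $β$-stably $α$-continuous iff $\Oc^\op$ is $β$-complete, i.e., iff $\Oc$ is $β$-cocomplete.
	\item The completeness conditions ($α$-complete for $\Oc$, $β$-complete for $\Kc^\op$) are exactly exactness, handled by Proposition~\ref{prop:meet-corresp}.
\end{enumerate}
Hence $\Oc ∈ \cSt_α\cSup_β$ iff $\Kc^\op ∈ \cSt_β\cSup_α$.

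\emph{Conclusion.} Since both categories are full in the categories related by Theorem~\ref{thm:LexCont-dual}, and the object classes correspond, the equivalence restricts to $\cSt_α\cSup_β ≃ (\cSt_β\cSup_α)^\op$.

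The only delicate point is the bookkeeping in step~2. One must check that the swap $(\Oc,\Kc) ↦ (\Kc^\op,\Oc^\op)$ really exchanges the roles of $α$ and $β$ as used in Proposition~\ref{prop:StCont-corresp}, including the hypothesis $γ ⊆$ (the first index), which here reads $β ⊆ β$. One must also check that meets in $\Oc^\op$ are joins in $\Oc$, computed in the original order. This holds because, by Lemma~\ref{lem:enough-points-impl-sep}, the polarity is separating.
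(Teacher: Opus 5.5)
Your proposal is correct and follows the paper's own route. The theorem is the special case $\alpha=\alpha'$, $\beta=\beta'$ of Theorem~\ref{thm:StLexContSup-dual}, which the paper deduces by restricting the duality of Theorem~\ref{thm:LexCont-dual} with Proposition~\ref{prop:StCont-corresp} applied to $(\Oc,\Kc)$ and to the swapped $(\beta,\alpha)$-structure $(\Kc^\op,\Oc^\op)$, while Proposition~\ref{prop:StCont-pointfull} makes the points condition automatic, exactly as you do.
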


\section{Supplements}
\label{sec:supp}

In this second part, we start by reproducing the analysis of \cite{JunSun1996} (see also \cite{vanGool2012}) and then we show how perfect maps fit in the picture.

\subsection{Algebraicity and proximity posets}

Let $\Oc$ be a $β$-continuous poset. We say that $U ∈ \Oc$ is \emph{$β$-compact} or \emph{$β$-algebraic} if $U ≪_β U$. We denote by $\Alg_β(\Oc)$ the sub-poset of $β$-algebraic elements of $\Oc$. It is the set of $U ∈ \Oc$ such that ${↑}U$ is $β$-open. It is thus the intersection $\Oc ∩ \Kc$ if $(\Oc,\Kc)$ is an $(α,β)$-intermediate structure, for any $α$.

\begin{dfn}{}{}
	A $β$-continuous poset $\Oc$ is \emph{$β$-algebraic} if it is of the form $\Idl_β(P)$.
\end{dfn}

The standard arguments apply. For instance, $\Oc$ is algebraic exactly when every element is the $β$-filtered supremum of the algebraic elements below it. We recover $P$ as $\Alg_β(\Idl_β(P))$. If $\Oc$ is algebraic, then it has enough $(α,β)$-points for any $α$, and $U ≪_β V$ iff there is $x ∈ \Alg_β(\Oc)$ such that $U ≤ x ≤ V$. The $(α,β)$-dual of $\Idl_β(P)$ is $\Flt_α(P)$. In the $(α,β)$-intermediate structure $(\Idl_β(P),\Flt_α^\op(P))$, $P$ is identified with $\Idl_β(P) ∩ \Flt_α^\op(P)$. For all $F ∈ \Flt_α^\op(P), I ∈ \Idl_β(P)$, we have
\[ F ≤ I ⇔ ∃ p∈P : F ≤ p ≤ I \text{.} \]

\begin{dfn}{}{}
	An $(α,β)$-intermediate structure is called \emph{algebraic} when it is of the form $(\Idl_β(P),\Flt_α^\op(P))$.
\end{dfn}

We can describe explicitly the category of algebraic $(α,β)$-intermediate structures. Given two posets $P$ and $Q$, a morphism $(\Idl_β(P),\Flt_α^\op(P)) → (\Idl_β(Q),\Flt_α^\op(Q))$ is a pair of adjoints $f : \Idl_β(P) → \Idl_β(Q)$ and $g : \Flt_α^\op(Q) → \Flt_α^\op(P)$. Since $f$ preserves $β$-filtered suprema and since $g$ preserves $α$-filtered infima, these maps are given by two relations $R ⊆ Q^\op×P$ and $S ⊆ Q^\op×P$. The adjunction relation moreover gives that $R=S$. We use the notation $R(—,p) ≔ \setst{q}{R(q,p)}$ and similarly for $R(q,—)$. From this discussion, the morphisms from $(\Idl_β(P),\Flt_α^\op(P))$ to $(\Idl_β(Q),\Flt_α^\op(Q))$ are in bijection with the up-sets $R ⊆ Q^\op × P$ such that $R(—,p)$ is a $β$-ideal for all $p∈P$ and such that $R(q,—)$ is an $α$-filter for all $q∈Q$. The composition of these relations is the usual composition of relations.

\begin{prop}{}{inter-cauchy-compl}
	The category $\cInter_{αβ}$ is the Cauchy completion of the category of algebraic $(α,β)$-intermediate structures.
\end{prop}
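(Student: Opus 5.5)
We prove the statement by checking the two properties that characterize a Cauchy completion. Let $\mathcal A$ be the full subcategory of algebraic $(α,β)$-intermediate structures. We will show that $\cInter_{αβ}$ is idempotent-complete, and that every object of $\cInter_{αβ}$ is a retract of an object of $\mathcal A$. Since $\mathcal A$ is full, these two facts together identify $\cInter_{αβ}$ with the Cauchy completion of $\mathcal A$.

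\emph{Idempotent completeness.} Let $e = (e^*, e_!)$ be an idempotent endomorphism of $(\Oc,\Kc)$. We split it on fixpoints, with candidate object $(\fix(e^*), \fix(e_!))$ carrying the restricted polarity. This candidate should be an $(α,β)$-intermediate structure. The inclusions and the maps $e^*, e_!$ corestricted to the fixpoints should then give the splitting, and the adjunction restricts directly.

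To verify the axioms, we use three facts. First, $e^*$ preserves $β$-filtered suprema; this follows from Proposition~\ref{prop:flat-impl-cont}, since $e^*$ is flat. Second, $e_!$ dually preserves $α$-filtered infima. Hence $\fix(e^*)$ is closed under $β$-filtered suprema in $\Oc$, and \ref{ax:filt-sup-inf} holds. Third, for $A \in \fix(e_!)$ we have
\[ \setst{U\in\fix(e^*)}{A≤U} = \setst{U\in\fix(e^*)}{A≤e^*U}, \]
which is the image under $e^*$ of the $β$-open $α$-filter $\setst{U}{A≤U}$. From this we check \ref{ax:IS-K}, using that $e^*$ preserves $β$-filtered suprema and is monotone.

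Axiom \ref{ax:IS-inter} should follow by applying $e$ to an interpolant. If $A ≤ U$ with both fixed, interpolate $A≤V≤B≤U$ in $\Oc∪\Kc$. Then $A = e_!A ≤ V$ gives $A ≤ e^*V$, and the remaining inequalities are transported similarly using $e^*e^* = e^*$ and the adjunction. Separation is inherited directly from $\Oc$ and $\Kc$. We expect this to be the main obstacle: the polarity order on the fixpoint structure must coincide with the restriction of the ambient one, the interpolants must be pushed into the fixpoints correctly, and the $β$-openness of the images in \ref{ax:IS-K} and \ref{ax:IS-O} needs the characterization of $≪_β$ from Proposition~\ref{prop:inter-to-cont}.

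\emph{Every object is a retract of an algebraic one.} Given $(\Oc,\Kc)$, take $P$ to be $\Oc$ with the order $≪_β$. More concretely, consider $\Idl_β(\Oc)$ together with the maps ${↡}_β$ and $\fjoin{β}$. In relational terms, the morphism $(\Oc,\Kc)\to(\Idl_β(\Oc),\Flt_α^\op(\Oc))$ is given by $U ↦ {↡}_βU$ on the $\Oc$ side. The retraction is given by the relation $R(V,W) ⇔ V ≪_β W$ restricted appropriately.

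Rather than work in $\Oc$, a cleaner choice is to use the intermediate structure itself. Take $P = \Oc∪\Kc$ with the polarity order, or the poset of pairs $A≤U$. Then embed $\Oc$ via $U↦\setst{V}{∃A: V≤A≤U}$, and $\Kc$ dually via the $α$-filters of Proposition~\ref{prop:inter-to-cont}.

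We then have to check the following:
\begin{enumerate}
	\item The section is a morphism. This uses \ref{ax:IS-O} to get the $β$-ideals and \ref{ax:IS-K} to get the $α$-filters.
	\item The composite is the identity. This reduces to $U = \fjoin{β}{↡}_βU$ and $A = \fmeet{α}{↟}_αA$, again by Proposition~\ref{prop:inter-to-cont}.
\end{enumerate}

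Once both properties are established, the statement follows.
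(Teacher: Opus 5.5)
Your overall strategy is right: show that $\cInter_{αβ}$ is idempotent-complete and that every object is a retract of an algebraic one. The retract step, however, has a genuine gap.

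The only section you actually write down is
\[ s^*(U) = {↡}_βU = \setst{V}{∃A : V≤A≤U} \in \Idl_β(\Oc), \]
and in general it is not part of a morphism of intermediate structures. A morphism also needs a left adjoint $s_! : \Flt_α^\op(\Oc) → \Kc$. In $(\Idl_β(\Oc),\Flt_α^\op(\Oc))$, the relation $F ≤ I$ means that $F$ meets $I$. So one would need $\setst{U}{s_!(F)≤U} = ⋃_{p∈F}{↟}_βp$, and for $F={↑}p$ this is ${↟}_βp$, which \ref{ax:IS-K} requires to be an $α$-filter. That is an $α$-stability condition (compare Proposition~\ref{prop:StCont-pointfull}). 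For $(α,β)=(ω,ω)$ it fails in any continuous lattice whose way-below relation is not multiplicative, e.g.\ the opens of the line with two origins.

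Your item 1 only checks that the images are ideals and filters, which is not what being a morphism means. You need the adjoint on the $\Kc$-side, both for the section and for the retraction. Using $\Oc∪\Kc$ instead does not help: with the down-closure of your ideal, $F={↑}V$ again produces ${↟}_βV$.

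The missing idea is the Jung--S\"underhauf poset of pairs $P=\setst{(U,A)}{U≤A}$. Note the orientation: $U≤A$, not $A≤U$. The maps are:
\begin{itemize}
\item on the $\Oc$-side, $V ↦ \setst{(U,A)}{A≤V}$ and $\set{(U_i,A_i)}_i ↦ \fjoin{β}_iU_i$;
\item on the $\Kc$-side, $A ↦ \setst{(U,B)}{A≤U}$ and $\set{(U_i,A_i)}_i ↦ \fmeet{α}_iA_i$.
\end{itemize}
The $\Oc$-side ideal is cut out by the $\Kc$-coordinate and vice versa. This is why the adjunctions reduce to $\fmeet{α}_iA_i≤V ⇔ ∃i : A_i≤V$ (by \ref{ax:IS-O}) and its dual (by \ref{ax:IS-K}). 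The identity $\fjoin{β}\setst{U}{∃A : U≤A≤V}=V$ gives the composite on $\Oc$, and \ref{ax:IS-inter} shows that $\setst{(U,A)}{A≤V}$ is a $β$-ideal.

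The idempotent-splitting half follows the right route (fixpoints with the restricted polarity), but two of your claims need correcting.
\begin{itemize}
\item Separation is not inherited directly, because the restricted polarity order only tests against fixpoints. You need $A≤U ⇔ e_!(A)≤U$ for $U∈\fix(e^*)$ (and dually), so that tests against all of $\Kc$ reduce to tests against $\fix(e_!)$.
\item What you call the main obstacle is actually false. The polarity order $≤^*$ of $(\fix(e^*),\fix(e_!))$ agrees with the ambient one on $A≤U$, $U≤V$ and $A≤B$. But $U≤^*A$ only says $∀C∈\fix(e_!) : C≤U ⇒ C≤A$, which is strictly weaker. For instance, take $α=β=∞$, $\Oc=\Kc$ the two-element chain $0<1$, $e^*$ constant with value $1$ and $e_!$ constant with value $0$. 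Then \ref{ax:IS-inter} for $0≤1$ forces $1≤^*0$, which fails in the ambient order.
\end{itemize}
So \ref{ax:IS-inter} must be checked for $≤^*$. Given $A≤V≤B≤U$ with $A,U$ fixed, one has $A≤e^*(V)$ and $e_!(B)≤U$. Moreover $e^*(V)≤^*e_!(B)$: if $C∈\fix(e_!)$ and $C≤e^*(V)$, then $C=e_!(C)≤V≤B$, hence $C=e_!(C)≤e_!(B)$.
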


In fact, we will show that $\cInter_{αβ}$ is the completion of the category of algebraic $(α,β)$-intermediate structures under splitting of \emph{idempotent comonads}. It turns out to also be Cauchy complete.

\begin{proof*}{}
	Let $(\Oc,\Kc) ∈ \cInter_{αβ}$. We wish to show that it is the adjoint retract of some $(\Idl_β(P),\Flt_α^\op(P))$. Since $\Oc$ is $β$-continuous, we have the following adjoint retract.
	\[\begin{tikzcd}
		\Oc \ar[r,shift right=0.8em,"{↡}_β(—)"',""{above,name=B}] & \Idl_β(\Oc) \ar[l,shift right=0.8em,"∨(—)"',""{below,name=A}]
		\ar[from=A,to=B,phantom,"⊢"{sloped,description}]
	\end{tikzcd}\]
	However, there is no reason for it to be compatible with $\Kc$. Instead, we use the construction from \cite[Sec.~6]{JunSun1996}. Let $P = \setst{(U,A) ∈ \Oc×\Kc}{U≤A}$. We define the following maps:
	\begin{itemize}
		✦ $\Oc → \Idl_β(P)$ sends $V ∈ \Oc$ to $\setst{U≤A}{A≤V}$. It is indeed a $β$-ideal because if $(U_i ≤ A_i ≤ V)_i$ is a $β$-small family, then there is $A ≤ V$ with $∀i : A_i≤A$, and we obtain $A ≤ U ≤ B ≤ V$ by \ref{ax:IS-inter} so that $(U≤B) ∈ P$ is an upper bound.
		✦ $\Idl_β(P) → \Oc$ sends $\set{U_i ≤ A_i}_i$ to $\fjoin{β}_i U_i$.
		✦ The maps $\Kc → \Flt_α^\op(P)$ and $\Flt_α^\op(P) → \Kc$ are defined similarly.
	\end{itemize}
	We then check:
	\begin{itemize}
		✦ The composite $\Oc → \Idl_β(P) → \Oc$ is the identity: It is
		\[ (U ∈ \Oc) ↦ \fjoin{β} \setst{V}{∃A : V≤A≤U} = U \text{.} \]
		✦ The composite $\Idl_β(P) → \Oc → \Idl_β(P)$ is below the identity: It is
		\[ (\set{U_i≤A_i}_i ∈ \Idl_β(P)) ↦ \setst{U≤A}{A≤\fjoin{β}_i U_i} = \setst{U≤A}{∃i : A≤U_i} \text{.} \]
		✦ $\Oc → \Idl_β(P)$ is right adjoint to $\Flt_α^\op(P) → \Kc$: Let $U ∈ \Oc$ and let $\set{U_i≤A_i}_i ∈ \Flt_α^\op(P)$. Then
		\[ \fmeet{α}_i A_i ≤ U ⇔ ∃i : A_i ≤ U ⇔ \set{U_i≤A_i}_i ≤ \setst{V≤B}{B≤U} \text{.} \]
		✦ Dually, $\Idl_β(P) → \Oc$ is right adjoint to $\Kc → \Flt_α^\op(P)$.
	\end{itemize}
	This shows that $(\Oc,\Kc)$ is an adjoint retract of $(\Idl_β(P),\Flt_α^\op(P))$.
	
	We show now that $\cInter_{αβ}$ is Cauchy-complete. It would be more natural to show that it is closed under splitting of idempotent comonads, but this is a weaker statement. Let $(\Oc,\Kc) ∈ \cInter_{αβ}$. Let $e : \Oc → \Oc$ be idempotent and right adjoint to $f : \Kc → \Kc$. Let $\fix(e) ⊆ \Oc$ and $\fix(f) ⊆ \Kc$ be the sets of fixpoints of $e$ and $f$, with the induced order. We equip $(\fix(e),\fix(f))$ with the restricted polarity. We show that this produces an $(α,β)$-intermediate structure. In order to distinguish the polarity orders of $(\Oc,\Kc)$ and $(\fix(e),\fix(f))$, we shall write $≤^*$ for the latter. Note that $≤^*$ coincides with $≤$ by definition on inequalities of the form $A≤U$, $A≤B$ and $U≤V$. On the other hand, $U ≤^* A$ is equivalent to $∀B ∈ \fix(f) : B≤U ⇒ B≤A$, which is weaker than $U≤A$ (unless $(e,f)$ is an idempotent comonad). We now check the various axioms of $(α,β)$-intermediate structures:
	\begin{itemize}
		✦[\ref{ax:IS-sep}] The polarity is separating: This is simply because if $A ∈ \Kc$ and $U ∈ \fix(e)$, then $A ≤ U ⇔ A ≤ e(U) ⇔ f(A) ≤ U$.
		✦[\ref{ax:filt-sup-inf}] $\fix(e)$ has $β$-filtered suprema because $e$ preserves them.
		✦[\ref{ax:IS-K}] Let $A ∈ \fix(f)$. We check that $\setst{U∈\fix(e)}{A≤U}$ is a $β$-open $α$-filter. It is $β$-open because $β$-filtered suprema in $\fix(e)$ are computed in $\Oc$. It is an $α$-filter because if $(A ≤ U_i)_i$ is $α$-small with $U_i ∈ \fix(e)$, then there is $U ∈ \Oc$ with $A ≤ U$ and $∀i : U ≤ U_i$. As we saw above, this implies that $A ≤ e(U)$ and also $∀i : e(U) ≤ U_i$.
		✦[\ref{ax:IS-O}] This axiom is dual to \ref{ax:IS-K}.
		✦[\ref{ax:IS-inter}] Suppose that $A ≤ U$ with $A ∈ \fix(f)$ and $U ∈ \fix(e)$. Then we find $V∈\Oc$ and $B∈\Kc$ such that $A ≤ V ≤ B ≤ U$. Hence $A ≤ e(V)$ and $f(B) ≤ U$. Finally, we show that $e(V) ≤^* f(B)$: for all $C ∈ \fix(f)$ such that $C ≤ e(V)$, we have $C = f(C) ≤ V ≤ B$, hence $C = f(C) ≤ f(B)$.
	\end{itemize}
	We also showed that for all $A ∈ \fix(f) ⊆ \Kc$, we have $A ≤ U ⇔ A ≤ e(U)$. This shows that $(\Oc ↠ \fix(e), \fix(f) ↣ \Kc)$ is a morphism in $\cInter_{αβ}$, so the idempotent $(e,f)$ splits as announced.
\end{proof*}

From Proposition~\ref{prop:inter-cauchy-compl} above, we can present $(α,β)$-intermediate structures using ``proximity posets,'' to copy the terminology of, e.g., \cite{JunSun1996}, or ``infosyses'' in the terminology of \cite{Vic1993}. A \emph{proximity poset} is a poset $P$ equipped with an upward binary relation $[≺] ⊆ P^\op × P$ which is idempotent and, optionally, is contained in the identity $[≤] ⊆ P^\op × P$ if we want to work with idempotent comonads. An \emph{$(α,β)$-proximity poset} is a proximity poset $(P,≺)$ such that $p≺(—)$ is an $α$-filter for all $p ∈ P$, and such that $(—)≺ p$ is a $β$-ideal for all $p∈P$. A \emph{morphism} $P → Q$ of $(α,β)$-proximity posets is an up-set $R ⊆ Q^\op × P$ such that:
\begin{itemize}
	✦ $R(—,p)$ is a $β$-ideal for all $p∈P$.
	✦ $R(q,—)$ is an $α$-filter for all $q∈Q$.
	✦ $R = [≺] ∘ R ∘ [≺]$.
\end{itemize}
Then $\cInter_{αβ}$ is equivalent to the category of $(α,β)$-proximity posets.
The $β$-continuous poset $\Oc_β(P,≺)$ presented by $(P,≺)$ is the set of $β$-ideals $I ⊆ P$ such that for all $p ∈ P$, we have $p ∈ I ⇔ ∃q∈I : p ≺ q$. Equivalently, $I ∈ \Oc_β(P,≺)$ if it is a subset of $P$ satisfying:
\begin{itemize}
	✦ $p ∈ I$ and $q ≺ p$ imply that $q ∈ I$.
	✦ $(p_i)_i ⊆ I$ is $β$-small imply that there is $p ∈ I$ with $∀i : p_i ≺ p$.
\end{itemize}
The $(α,β)$-dual $\Kc_α(P,≺)^\op$ of $\Oc_β(P,≺)$ is the set of $α$-filters $F ⊆ P$ such that for all $p ∈ P$, we have $p ∈ F ⇔ ∃q∈F : q ≺ p$. There are two canonical maps from $P$ to the intermediate structure of $(\Oc_β(P,≺),\Kc_α(P,≺))$:
\begin{itemize}
	✦ $o : P → \Oc_β(P,≺)$ sends $p$ to $\setst{q}{q≺p}$.
	✦ $k : P → \Kc_α(P,≺)$ sends $p$ to $\setst{q}{p≺q}$.
\end{itemize}
Note that $p ≺ q ⇔ k(p) ≤ o(q)$, which is reminiscent of the relation between polarities and the complete lattices their present. We also have $o(p)≤k(p)$ for all $p∈P$. Given $I ∈ \Oc_β(P,≺)$ and $p ∈ P$, we have $p∈I ⇔ k(p) ≤ I$. Given $I ∈ \Oc_β(P,≺)$ and $F ∈ \Kc_α(P,≺)$, we have
\[ F≤I ⇔ ∃p ∈ P : F≤o(p)≤k(p)≤I \text{.} \]
Consequently, $I ≪_β J$ for $I,J ∈ \Oc_β(P,≺)$ if and only if there is $p∈P$ such that $I ≤ o(p) ≤ k(p) ≤ J$, i.e., such that $I ⊆ o(p)$ and $p ∈ J$.
The two maps $o$ and $k$ coincide when $[≺] = [≤]$ and $(\Oc_β(P,≺),\Kc_α(P,≺)) = (\Idl_β(P),\Flt_α^\op(P))$. The proximity poset $(P,≺)$ build in the proof of Proposition~\ref{prop:inter-cauchy-compl} to present $(\Oc,\Kc)$ is given by $P = \setst{(U,A) ∈ \Oc×\Kc}{U≤A}$ and $(U,A) ≺ (U',A') ⇔ A≤U'$. The map $o : P→\Oc$ sends $(U≤A)$ to $U$. Dually, $k : P→\Kc$ sends $(U≤A)$ to $A$.

\subsection{Proximity lattices for $\cSt_α\cInf_{α'}\cCont_{β'}\cSup_β$}

We specialize proximity posets to $\cSt_α\cInf_{α'}\cCont_{β'}\cSup_β$. The next lemma shows that $\cSt_α\cInf_{α'}\cCont_{β'}\cSup_β$ is Cauchy-complete.

\begin{lem}{}{cocompl-cauchy}
	$β$-cocomplete posets are stable under poset retracts.
\end{lem}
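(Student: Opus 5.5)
Let $P$ be a $β$-cocomplete poset and $Q$ a poset retract of $P$: there are poset maps $s : Q → P$ and $r : P → Q$ with $r∘s = \mathrm{id}_Q$. We show that $Q$ is $β$-cocomplete by transporting joins along the retraction. Given a $β$-small family $(q_i)_{i∈I}$ in $Q$, the candidate join is
\[ q ≔ r\p[\Big]{\sjoin{β}_i s(q_i)} \text{,} \]
where the join on the right is taken in $P$.

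The candidate is an upper bound. For each $i$ we have $s(q_i) ≤ \sjoin{β}_j s(q_j)$. Applying the monotone map $r$ gives $q_i = r(s(q_i)) ≤ q$.

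It is also the least upper bound. Let $q'$ be an upper bound of the $q_i$ in $Q$. Then $s(q_i) ≤ s(q')$ for all $i$, because $s$ is monotone. Hence $\sjoin{β}_i s(q_i) ≤ s(q')$ in $P$. Applying $r$ gives $q ≤ r(s(q')) = q'$. So $q$ is the supremum of $(q_i)_i$ in $Q$, and $Q$ has all $β$-joins.

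The only choice is to build the join by pushing forward through $s$ and pulling back through $r$. The verification uses nothing beyond monotonicity of $r$ and $s$ and the identity $r∘s = \mathrm{id}_Q$; it needs no adjointness and no assumption that $r$ or $s$ preserve joins. It is also constructive, since only the given $β$-joins in $P$ are used. In the intended application, where the retract is obtained by splitting an idempotent $e$ on $P$, this says that $\fix(e)$ is $β$-cocomplete with joins $e\p[\big]{\sjoin{β}_i x_i}$.
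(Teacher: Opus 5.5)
Your proof is correct and is the paper's argument. The paper works with the retract $\fix(e)$ of an idempotent poset map $e : P → P$ and states that the join of a $β$-small family $(x_i)_i ⊆ \fix(e)$ is $e(\sjoin{β}_i x_i)$; this is your $r(\sjoin{β}_i s(q_i))$ transported along $s$, with $e = s∘r$, and you additionally write out the two monotonicity checks that the paper leaves to the reader.
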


\begin{proof*}{}
	Let $P$ be a $β$-cocomplete poset and let $e : P→P$ be an idempotent poset map. The supremum of a $β$-small family $(x_i)_i ⊆ \fix(e)$ is computed as $e(\sjoin{β}_i x_i)$.
\end{proof*}

\begin{dfn}{}{}
	An $(α,β)$-proximity poset $(P,≺)$ is \emph{exact} when for all $β$-small $(x_i)_i ⊆ P$ and for all $α$-small $(y_j)_j ⊆ P$,
	\[ ∀i,j : x_i ≺ y_j ⇒ ∃ z ∈ P : (∀i : x_i ≺ z) ∧ (∀j : z ≺ y_j) \text{.} \]
\end{dfn}

\begin{lem}{}{exact-prox}
	An $(α,β)$-proximity poset $(P,≺)$ is exact iff $(\Oc_β(P,≺),\Kc_α(P,≺))$ is exact.
\end{lem}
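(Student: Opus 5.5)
We have to show that the exactness condition on $(P,≺)$ holds iff $\Oc ≔ \Oc_β(P,≺)$ has $α$-meets. By Proposition~\ref{prop:meet-corresp}, this is equivalent to $\Kc ≔ \Kc_α(P,≺)$ having $β$-joins. The tools are the two canonical maps $o$ and $k$ together with the facts recalled above:
\begin{itemize}
	✦ $p≺q ⇔ k(p)≤o(q)$;
	✦ $p∈I ⇔ k(p)≤I$;
	✦ $F≤I ⇔ ∃p : F≤o(p)≤k(p)≤I$.
\end{itemize}
We also use that $\Oc$ is $β$-continuous with $≪_β$ characterized through $o$ and $k$.

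\emph{Exact proximity $⇒$ $\Oc$ has $α$-meets.} Let $(I_j)_j$ be an $α$-small family in $\Oc$. We take the candidate
\[ I ≔ \setst{x}{∃z : x≺z ∧ ∀j : z ∈ I_j} \text{,} \]
that is, the $≺$-interior of $⋂_j I_j$. Each $I_j$ is a round down-set, so $I ⊆ ⋂_j I_j$, and $I$ is a $≺$-down-set.

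The first task is to show $I ∈ \Oc$, i.e., that it is round and $β$-directed. Let $(x_i)_i ⊆ I$ be $β$-small, with $x_i≺z_i$ and $z_i ∈ ⋂_j I_j$. For each $j$, the family $(z_i)_i$ is $β$-small in the $β$-ideal $I_j$, so there is $y_j∈I_j$ with $z_i≺y_j$ for all $i$. Then $x_i≺y_j$ for all $i,j$, because $≺$ is transitive ($≺∘≺=≺$) and order-compatible. Exactness applied to $(x_i)_i$ and $(y_j)_j$ gives $z$ with $x_i≺z≺y_j$. Hence $z ∈ I_j$ for every $j$, so $z∈I$ and $z$ is a $≺$-upper bound of the $x_i$. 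Roundness follows from the case of a single $x_i$.

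The second task is to show $I$ is the greatest lower bound. If $J∈\Oc$ satisfies $J⊆I_j$ for all $j$, then each $x∈J$ has some $z∈J$ with $x≺z$, and this $z$ lies in $⋂_j I_j$, so $J⊆I$.

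\emph{$\Oc$ has $α$-meets $⇒$ exact.} Let $(x_i)_i$ be $β$-small and $(y_j)_j$ be $α$-small with $x_i≺y_j$ for all $i,j$. We cannot expect $\Oc$ or $\Kc$ to contain the elements we want, so we build $I ≔ \smeet{α}_j o(y_j)$ in $\Oc$. This meet is stable in the intermediate structure by the remark after the definition of exactness. Since $k(x_i)≤o(y_j)$ for all $j$, we get $k(x_i)≤I$.

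Next, $\Kc$ has $β$-joins, so we set $F ≔ \sjoin{β}_i k(x_i)$; these joins are likewise stable. Then $F≤I$. By the interpolation fact, there is $z$ with $F≤o(z)≤k(z)≤I$. From this chain we read off $k(x_i)≤o(z)$, i.e., $x_i≺z$, and $k(z)≤o(y_j)$, i.e., $z≺y_j$.

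The main point needing care is justifying that $F≤I$ holds in the polarity order whenever every $k(x_i)≤I$. This uses that $\Kc$ is stable under $β$-joins and $\Oc$ under $α$-meets inside the intermediate structure, as stated after the definition of exact intermediate structures, together with Lemma~\ref{lem:polarity-sep-inf} and its dual.

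The argument as a whole relies on the polarity-order characterizations of $o$ and $k$ being valid, which the paper asserts.
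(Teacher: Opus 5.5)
\textbf{Direction ``$\Oc$ has $\alpha$-meets $\Rightarrow$ $(P,\prec)$ exact.''} This is the paper's argument: compare $\bigvee_i k(x_i)$ with $\bigwedge_j o(y_j)$ and interpolate through some $o(z)\le k(z)$. Your justification of $F\le I$ is partly misattributed, however. Lemma~\ref{lem:polarity-sep-inf} and its dual say that meets existing in $\Kc$ and joins existing in $\Oc$ are computed in the presented complete lattice. That is the opposite of what you need.

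What you actually need is that $A\le U_j$ for all $j$ implies $A\le\bigwedge_j U_j$, and that $A_i\le U$ for all $i$ implies $\bigvee_i A_i\le U$. These hold for two reasons. The $\alpha$-filter of \ref{ax:IS-K} is closed under the $\alpha$-meets existing in $\Oc$. The $\beta$-ideal of \ref{ax:IS-O} is closed under the $\beta$-joins existing in $\Kc$. This is exactly what the remark after the definition of exactness records, so citing that remark alone suffices.

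\textbf{Direction ``exact $\Rightarrow$ $\Oc$ has $\alpha$-meets'': a genuine gap.} You define $I$ as the $\prec$-interior of $\bigcap_j I_j$, obtain $z$ with $x_i\prec z\prec y_j$, and conclude ``$z\in I_j$ for every $j$, so $z\in I$''. The first half is correct. But membership in $I$ requires some $w\in\bigcap_j I_j$ with $z\prec w$, and you have not produced one. Roundness of each $I_j$ only gives successors $w_j\in I_j$ that depend on $j$. Merging them into a single successor is itself an instance of exactness; indeed it is precisely the roundness of $\bigcap_j I_j$, which is the content of the lemma. Your roundness claim (``the case of a single $x_i$'') has the same defect.

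One repair is a further use of exactness, on the singleton $\beta$-family $\{z\}$ against $(y_j)_j$. This gives $z\prec w\prec y_j$, hence $w\in\bigcap_j I_j$ and $z\in I$.

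Cleaner still, and this is what the paper does, is to run your argument directly on $\bigcap_j I_j$. Take a $\beta$-small $(x_i)_i\subseteq\bigcap_j I_j$. For each $j$, choose $y_j\in I_j$ with $x_i\prec y_j$ for all $i$. Exactness then gives $z$ with $x_i\prec z\prec y_j$, so $z\in\bigcap_j I_j$. Together with the evident $\prec$-down-closure of the intersection, this shows $\bigcap_j I_j\in\Oc_\beta(P,\prec)$. It is therefore the meet, and in fact $I=\bigcap_j I_j$. Both the interior construction and your separate greatest-lower-bound check become unnecessary.
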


\begin{proof*}{}
	Suppose that $(\Oc_β(P,≺),\Kc_α(P,≺))$ is exact. We show that $P$ is exact.\footnote{A direct proof using the existence of $α$-meets in $\Oc_β(P,≺)$ would also be possible.} Let $(x_i)_i ⊆ P$ be a $β$-small family and let $(y_j)_j ⊆ P$ be an $α$-small family such that $x_i ≺ y_j$ for all $i,j$. Thus $\sjoin{β}_i k(x_i) ≤ \smeet{α}_j o(y_j)$ and we obtain $z ∈ P$ such that $\sjoin{β}_i k(x_i) ≤ o(z)$ and $k(z) ≤ \smeet{α}_j o(y_j)$, i.e., such that $∀i : x_i≺z$ and $∀j : z≺y_j$.
	
	Reciprocally, suppose that $(P,≺)$ is $(α,β)$-exact. Let $(I_j)_j ⊆ \Oc_β(P,≺)$ be an $α$-small family. We show that $⋂_j I_j ∈ \Oc_β(P,≺)$: if $(x_i)_i ⊆ ⋂_j I_j$ is a $β$-small family, then for each $j$ there is $y_j ∈ I_j$ with $∀i : x_i≺y_j$. Since $P$ is $(α,β)$-exact, there is $z ∈ P$ with $∀i : x_i≺z$ and $∀j : z≺y_j$. Thus $z$ is a $≺$-upper bound of $\set{x_i}_i$ in $⋂_j I_j$. Moreover, if $x ∈ ⋂_j I_j$ and $y ≺ x$, then $y ∈ ⋂_j I_j$. This shows that $⋂_j I_j ∈ \Oc_β(P,≺)$.
\end{proof*}

\begin{cor}{}{stinfcontsup-proximity-pres}
	$\cSt_α\cInf_{α'}\cCont_{β'}\cSup_β$ is equivalent to the category of $α$-complete and $β$-cocomplete exact $(α',β')$-proximity posets.
\end{cor}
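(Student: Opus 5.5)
The plan is to combine the equivalence $\cInter_{α'β'} ≃$ ($(α',β')$-proximity posets), stated after Proposition~\ref{prop:inter-cauchy-compl}, with the characterizations of the extra structure on $\Oc$ in terms of the intermediate structure $(\Oc,\Kc)$. The category $\cSt_α\cInf_{α'}\cCont_{β'}\cSup_β$ is a full subcategory of $\cInf_{α'}\cCont_{β'}$. By Proposition~\ref{prop:ex-full-flat} it is therefore a full subcategory of $\cFlat_{α'}\cCont_{β'} ≃ \cInter_{α'β'}$. Under this equivalence it is spanned by the exact intermediate structures $(\Oc,\Kc)$ such that $\Kc$ is $α$-complete (Proposition~\ref{prop:StCont-corresp} with $γ=α⊆α'$) and $\Oc$ is $β$-cocomplete. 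Note that ``exact'' is the $α'$-completeness of $\Oc$, equivalently the $β'$-cocompleteness of $\Kc$ (Proposition~\ref{prop:meet-corresp}).

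The key steps, in order, are as follows.
\begin{enumerate}
	\item Restrict the equivalence between $\cInter_{α'β'}$ and proximity posets to the full subcategory of proximity posets $(P,≺)$ whose presented structure $(\Oc_{β'}(P,≺),\Kc_{α'}(P,≺))$ lies in the subcategory described above. This is legitimate because the equivalence sends $(P,≺)$ to that presented structure, and every object arises this way up to isomorphism by the construction in the proof of Proposition~\ref{prop:inter-cauchy-compl}.
	\item Translate each condition into a property of $(P,≺)$. Exactness of the presented structure is equivalent to exactness of $(P,≺)$ by Lemma~\ref{lem:exact-prox}.
	\item Interpret ``$α$-complete and $β$-cocomplete'' for a proximity poset as meaning that $\Kc_{α'}(P,≺)$ has $α$-meets and $\Oc_{β'}(P,≺)$ has $β$-joins. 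By Proposition~\ref{prop:StCont-corresp}, the former is the $α$-stability of $\Oc$.
\end{enumerate}
If a more intrinsic formulation is wanted, one can show that these conditions can be witnessed in $P$ itself, using the concrete $P$ built from $(\Oc,\Kc)$ in the proof of Proposition~\ref{prop:inter-cauchy-compl}. One would add formal $β$-joins and $α$-meets to that $P$, or choose $P$ closed under such operations. Lemma~\ref{lem:cocompl-cauchy} guarantees that these properties survive the splitting of idempotents used to pass from an algebraic structure to $(\Oc,\Kc)$.

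I expect the main obstacle to be the intrinsic version in the last step. One must check that the operations on $P$ are compatible with $≺$, namely that $β$-joins in $P$ interact with $≺$ so that $o$ preserves them up to the idempotent. Otherwise the cocompleteness of $P$ does not obviously transfer to $\Oc_{β'}(P,≺)$. To handle this, I would first try to show directly that the retraction $\Idl_{β'}(P)→\Oc$ sends the relevant joins of $P$ to joins of $\Oc$, and then invoke Lemma~\ref{lem:cocompl-cauchy} for the converse direction.
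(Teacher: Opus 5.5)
Your reduction to a full subcategory of $\cInter_{\alpha'\beta'}$ via Proposition~\ref{prop:ex-full-flat} is fine. Step~3, however, is where the argument stops being a proof. There you \emph{redefine} ``$\alpha$-complete and $\beta$-cocomplete'' for $(P,\prec)$ to mean that $\Kc_{\alpha'}(P,\prec)$ has $\alpha$-meets and $\Oc_{\beta'}(P,\prec)$ has $\beta$-joins. Under that reading the corollary is only the restriction of the equivalence with proximity posets to the preimage of a full subcategory, i.e.\ a tautology. The statement means that the underlying poset $P$ has $\alpha$-meets and $\beta$-joins. This is also how it is used right afterwards, where $o$ preserves $\alpha$-meets because $x\prec\smeet{\alpha}_i y_i\iff\forall i: x\prec y_i$. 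So the ``intrinsic version'' you defer is the whole content, and two things must be shown: (a) such a $P$ presents an object of $\cSt_\alpha\cInf_{\alpha'}\cCont_{\beta'}\cSup_\beta$, and (b) every object of that category is presented by such a $P$.

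For (a), the obstacle you anticipate does not exist. No compatibility between the joins of $P$ and $\prec$ is needed; whether $o$ preserves $\beta$-joins is the separate notion of respecting $\beta$-joins.
\begin{itemize}
\item If $P$ has $\beta$-joins, then so does $\Idl_{\beta'}(P)$: take the down-closure of the closure of $\bigcup_k I_k$ under $\beta$-joins.
\item $\Oc_{\beta'}(P,\prec)$ is a poset retract of $\Idl_{\beta'}(P)$ via $I\mapsto\setst{p}{\exists q\in I: p\prec q}$, so Lemma~\ref{lem:cocompl-cauchy}, which you already mention, applies directly.
\item Dually, $\Kc_{\alpha'}(P,\prec)$ is $\alpha$-complete as a retract of $\Flt_{\alpha'}^\op(P)$. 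This is the $\alpha$-stability of $\Oc$ by Proposition~\ref{prop:StCont-corresp}.
\item $\alpha'$-completeness follows from Lemma~\ref{lem:exact-prox}.
\end{itemize}

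For (b), do not add formal operations to $P$; you would then have to re-check what the enlarged proximity poset presents. The canonical $P=\setst{(U,A)\in\Oc\times\Kc}{U\le A}$ is already closed under componentwise $\alpha$-meets and $\beta$-joins:
\begin{itemize}
\item $\Oc$ has $\alpha$-meets since $\alpha\subseteq\alpha'$, and $\Kc$ has them by Proposition~\ref{prop:StCont-corresp}.
\item $\Oc$ has $\beta$-joins by $\beta$-cocompleteness, and $\Kc$ has them by exactness together with $\beta\subseteq\beta'$.
\item $\smeet{\alpha}_i U_i\le\smeet{\alpha}_i A_i$ in the polarity order by Lemma~\ref{lem:polarity-sep-inf}, and dually $\sjoin{\beta}_i U_i\le\sjoin{\beta}_i A_i$.
\end{itemize}
Exactness of this $P$ is again Lemma~\ref{lem:exact-prox}.
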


\begin{proof*}{}
	Let $(P,≺)$ be an $α$-complete, $β$-cocomplete, exact $(α',β')$-proximity poset. Then $\Idl_{β'}(P)$ has $β$-joins: the $β$-join of $(I_k)_k$ is the down-closure of the closure of $⋃_k I_k$ under $β$-joins. Dually, $\Flt_{α'}^\op(P)$ has $α$-meets. By Lemmas~\ref{lem:cocompl-cauchy} and \ref{lem:exact-prox}, $\Oc_β(P,≺)$ is in $\cSt_α\cInf_{α'}\cCont_{β'}\cSup_β$.
	
	Let $\Oc ∈ \cSt_α\cInf_{α'}\cCont_{β'}\cSup_β$ and let $\Kc^\op$ be its $(α',β')$-dual. Let $P = \setst{(U,A) ∈ \Oc×\Kc}{U≤A}$, equipped with the $(α',β')$-proximity $(U,A) ≺ (U',A') ⇔ A≤U'$ from the proof of Proposition~\ref{prop:inter-cauchy-compl} so that $\Oc = \Oc_{β'}(P,≺)$. From Lemma~\ref{lem:exact-prox}, we know that $(P,≺)$ is exact. It remains to show that $P$ has $α$-meets and, dually, $β$-joins. Indeed, $\Oc$ has $α$-meets and $\Kc$ too. Moreover, if $(U_i ≤ A_i)_i ⊆ P$ is $α$-small, then $\smeet{α}_i U_i ≤ \smeet{α}_i A_i$ by Lemma~\ref{lem:polarity-sep-inf}.
\end{proof*}

\begin{rmk}{}{}
	When $α=α'$, every $α$-complete $(α',β')$-proximity poset is exact. In this case, $\cSt_α\cInf_{α'}\cCont_{β'}\cSup_β$ is the Cauchy-completion of its subcategory of algebraic posets (this also works, dually, when $β=β'$).
\end{rmk}

\paragraph{Strong proximity lattices}
The notion of \emph{strong} proximity lattice from \cite[Sec.~5]{JunSun1996} also carries over. Let $(P,≺)$ be an $α$-complete, $β$-cocomplete, exact $(α',β')$-proximity poset. The map $o : P → \Oc_{β'}(P,≺)$ always preserves $α$-meets, using that $x ≺ \smeet{α}_i y_i ⇔ ∀i : x ≺ y_i$. We say that $(P,≺)$ \emph{respects $β$-joins} if the map $o : P → \Oc_{β'}(P,≺)$ preserves also $β$-joins. Dually, it \emph{respects $α$-meets} if the map $k : P → \Kc_{α'}(P,≺)$ preserves $α$-meets. For instance, the canonical proximity poset associated to $\Oc ∈ \cSt_α\cInf_{α'}\cCont_{β'}\cSup_β$ in Corollary~\ref{cor:stinfcontsup-proximity-pres} respects $β$-joins and $α$-meets. Just like in \cite[Prop.~17]{JunSun1996}, $(P,≺)$ respects $β$-suprema if and only if for all $p ∈ P$ and all $β$-small family $(q_i)_i ⊆ P$,
\[ p ≺ \sjoin{β}_i q_i ⇒ ∃ (p_i ≺ q_i)_i ⊆ P : p ≺ \sjoin{β}_i p_i \text{.} \]
This comes directly from the way that $β$-joins are computed in $\Oc_β(P,≺)$ using Lemma~\ref{lem:cocompl-cauchy}. Like in \cite[Sec.~7]{JunSun1996}, we can characterize the morphisms in $\cSt_α\cInf_{α'}\cCont_{β'}\cSup_β$ that preserve $β$-joins in terms of proximity posets that respect $β$-joins. We record that here without proof, as it is exactly analogous to \cite{JunSun1996}:

\begin{prop}{}{}
	Let $R ⊆ Q^\op × P$ be a morphism between two $β$-cocomplete exact $(α',β')$-proximity poset. Suppose that $P$ respects $β$-suprema. Then the map $\Oc_{β'}(P) → \Oc_{β'}(Q)$ corresponding to $R$ preserves $β$-suprema if and only if for all $q ∈ Q$ and all $β$-small $(p_i)_i ⊆ P$,
	\[ q\ R\ \sjoin{β}_i p_i ⇒ ∃(q_i)_i : q ≺ \sjoin{β}_i q_i \text{ and } ∀i : q_i\ R\ p_i\text{.} \]
\end{prop}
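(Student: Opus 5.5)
The plan is to reduce preservation of arbitrary $β$-joins to $β$-joins of generators of the form $o(p)$, and then compute both sides explicitly as subsets of $Q$.

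\emph{Step 1: the map.} Let $f : \Oc_{β'}(P,≺) → \Oc_{β'}(Q,≺)$ be the map corresponding to $R$. Concretely it sends $I$ to $\setst{q}{∃p∈I : q\ R\ p}$. On generators, $R = [≺]∘R∘[≺]$ gives
\[ f(o(p)) = \setst{q}{q\ R\ p} = R(—,p) \text{.} \]
Since $f$ is a morphism of the intermediate structures, it preserves $β'$-filtered suprema (Proposition~\ref{prop:flat-impl-cont}).

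\emph{Step 2: reduction to generators.} Every $I ∈ \Oc_{β'}(P,≺)$ is the $β'$-filtered supremum of the elements $o(p)$ with $p ∈ I$. Take a $β$-small family $(I_k)_k$. Its $β$-join should be the filtered supremum, over choices $(p_k ∈ I_k)_k$, of the elements $\sjoin{β}_k o(p_k)$. Two facts are needed here. First, the index set of choices is $β'$-filtered, because $β ⊆ β'$ and a $β$-small product of $β'$-filtered posets is $β'$-filtered. Second, this filtered supremum of $β$-joins is really the $β$-join. I would check the second fact directly from the description of $β$-joins in $\Oc_{β'}$ as retracts of $β$-joins in $\Idl_{β'}$ (Lemma~\ref{lem:cocompl-cauchy}), namely a down-closure of the closure under $β$-joins. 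Since $f$ preserves $β'$-filtered suprema, it follows that $f$ preserves all $β$-joins iff
\[ f\p[\Big]{\sjoin{β}_k o(p_k)} = \sjoin{β}_k f(o(p_k)) \]
for all $β$-small families $(p_k)_k$. Because $P$ respects $β$-suprema, the left-hand side equals $f(o(\sjoin{β}_k p_k)) = R(—,\sjoin{β}_k p_k)$.

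\emph{Step 3: compute the right-hand side.} By Lemma~\ref{lem:cocompl-cauchy}, the $β$-join in $\Oc_{β'}(Q,≺)$ of the family $(J_k)_k$ is $e$ applied to the $β$-join in $\Idl_{β'}(Q)$. Here $e(I) = \setst{q}{∃q'∈I : q ≺ q'}$, and the join in $\Idl_{β'}(Q)$ is the down-closure of the $β$-joins of elements. Since $≺$ is order-compatible, this yields
\[ q ∈ \sjoin{β}_k R(—,p_k) ⇔ ∃(q_k)_k : q ≺ \sjoin{β}_k q_k \text{ and } ∀k : q_k\ R\ p_k \text{.} \]
The inclusion $\sjoin{β}_k f(o(p_k)) ⊆ f(o(\sjoin{β}_k p_k))$ always holds by monotonicity of $f$. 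Hence equality is exactly the reverse inclusion, and that reverse inclusion is literally the displayed condition of the proposition.

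The main obstacle is Step 2: justifying that $β$-joins in $\Oc_{β'}(P,≺)$ commute with the $β'$-filtered approximations by $o(p)$. I would first try to verify it elementwise, using the explicit retract description of the join. That description says an element lies in the join iff it is $≺$-below some $β$-join of elements $p_k ∈ I_k$, and each such element already lies in $o(\sjoin{β}_k p_k)$ for a single choice of $(p_k)_k$.
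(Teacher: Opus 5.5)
Your argument is correct provided every round $\beta'$-ideal is nonempty, i.e.\ $\emptyset\in\beta'$; the caveat for the other case is explained below. The paper records this proposition without proof and only refers to \cite{JunSun1996}, so there is no proof to compare against line by line. Your proposal is a legitimate way to fill it in.

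The reduction in Step~2 is not needed. Use $R\circ[\prec]=R$ and the description of $\beta$-joins from Lemma~\ref{lem:cocompl-cauchy}. Then for any $\beta$-small family $(I_k)_k$ you can compute directly
\[ f\Big(\sjoin{\beta}_k I_k\Big)=\{q : q\ R\ \sjoin{\beta}_k p_k \text{ for some } (p_k\in I_k)_k\}, \]
\[ \sjoin{\beta}_k f(I_k)=\{q : q\prec\sjoin{\beta}_k q_k \text{ for some } (p_k\in I_k)_k \text{ and } (q_k\ R\ p_k)_k\}. \]
With these formulas the ``if'' direction is immediate, and it does not use that $P$ respects $\beta$-suprema. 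That hypothesis is needed only for ``only if'': it takes you from $q\ R\ x\prec\sjoin{\beta}_k p_k$ to $x\in\sjoin{\beta}_k o(p_k)$, hence $q\in f(\sjoin{\beta}_k o(p_k))$.

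Your reduction to the basis $o(P)$ through $\beta'$-filtered suprema gives the same result, but it hides this asymmetry. Your final elementwise check also lands in $o(\sjoin{\beta}_k p_k)$ rather than in $\sjoin{\beta}_k o(p_k)$. So it uses the hypothesis a second time; roundness of the $I_k$ would serve equally well there.

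There is one real caveat, and it affects the statement as much as your proof. Both the product of choices in Step~2 and the formula in Step~3, with one $q_k$ for each $k$, assume that every round $\beta'$-ideal is inhabited. If $\emptyset\notin\beta'$, empty ideals are allowed, and then the proposition fails as stated.

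Here is a counterexample:
\begin{itemize}
\item[] Take $\alpha=\alpha'=0$ and $\beta=\beta'=\omega^*$.
\item[] Let $P=\{a,b,c\}$ with $a,b<c$ and $Q=\{\star\}$, both with ${\prec}={\le}$, and let $R=\{(\star,b),(\star,c)\}$.
\item[] Then $\Oc_{\beta'}(P)=\{\emptyset,{\downarrow}a,{\downarrow}b,P\}$, and $f$ sends $\emptyset$ and ${\downarrow}a$ to $\emptyset$ and the other two elements to $\{\star\}$.
\item[] So $f$ preserves binary joins, and $P$ respects them.
\item[] Yet $\star\ R\ a\vee b$ holds while $R(-,a)=\emptyset$, so the displayed condition fails.
\end{itemize}

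The formula valid in general is the one coming from ``down-closure of the closure of $\bigcup_k I_k$ under $\beta$-joins'' in the proof of Corollary~\ref{cor:stinfcontsup-proximity-pres}. In $\Oc_{\beta'}(Q,\prec)$ it reads: the $\beta$-join of $(J_k)_k$ is the set of all $q$ with $q\prec\sjoin{\beta}_j y_j$ for some $\beta$-small family $(y_j)_j\subseteq\bigcup_k J_k$. Correspondingly, the condition must be weakened to: $q\prec\sjoin{\beta}_j q_j$ for some $\beta$-small family $(q_j)_j$, each member of which is $R$-related to some $p_i$.

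With these two changes the direct computation goes through. In your Step~2, empty $I_k$ can simply be discarded, since $f(\emptyset)=\emptyset$. When $\emptyset\in\beta'$, the weakened condition is equivalent to the stated one, because every $R(-,p_i)$ is then a nonempty $\beta'$-ideal. So in that case your proof is complete.
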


\subsection{Perfect maps}

There are several important kinds of morphisms between compact ordered spaces: closed ordered relations, semicontinuous maps (upper and lower), and continuous maps.
In terms of the frames of open up-sets, closed relations correspond to preframe morphisms (\cite[Ch.~4]{Tow96}, \cite{JunKegMos2001}; see also \cite{KurMosJun2023}, \cite[Sec.~1.3.3]{Mar2023}), semicontinuous maps correspond to frame morphisms,
and continuous maps correspond to \emph{perfect} frame morphisms.

\begin{dfn}{}{}
	We say that a map of $β$-continuous posets is \emph{perfect} if it preserves the $≪_β$ relation.
\end{dfn}

\begin{prop}{}{perfect-iff-ladj}
	Let $f = (f^* ⊢ f_!) : (\Oc_1,\Kc_1) → (\Oc_2,\Kc_2)$ be a morphism in $\cInter_{αβ}$. The following are equivalent:
	\begin{itemize}
		✦ $f^* : \Oc_1 → \Oc_2$ is perfect.
		✦ $f_! : \Kc_2 → \Kc_1$ has a right adjoint $\Kc_1 → \Kc_2$ in the category of posets.
		✦ $f^*$ extends to a poset map $\Oc_1 ∪ \Kc_1 → \Oc_2 ∪ \Kc_2$ sending $\Kc_1$ to $\Kc_2$.
	\end{itemize}
\end{prop}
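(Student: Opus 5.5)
We will prove the cycle (1)$⇒$(2)$⇒$(3)$⇒$(1). Throughout we use Proposition~\ref{prop:inter-to-cont}: $U ≪_β V ⇔ ∃A : U≤A≤V$ in the polarity order, together with the axioms \ref{ax:IS-sep}--\ref{ax:IS-inter} and the adjunction $f_!(A)≤U ⇔ A≤f^*(U)$.

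\textbf{(1)$⇒$(2).} Assume $f^*$ preserves $≪_β$. For $B∈\Kc_1$, the candidate right adjoint $g(B)$ is the element of $\Kc_2$ corresponding, under $\Kc_2^\op ≃ \pt_{αβ}(\Oc_2)$, to the up-set generated by $\setst{f^*(U)}{B≤U}$. Equivalently, it is the $α$-filtered infimum $\fmeet{α}$ of a suitable family in $\Kc_2$; for instance, by the construction in the proof of Proposition~\ref{prop:inter-to-cont}, $g(B)$ is the infimum of all $C∈\Kc_2$ with $f^*(V)≤C$ for some $V$ with $B≤V$.
\begin{itemize}
\item We must check that $F_B ≔ {↑}\setst{f^*(U)}{B≤U}$ is a $β$-open $α$-filter of $\Oc_2$.
\item It is an $α$-filter because $\setst{U}{B≤U}$ is one by \ref{ax:IS-K} and $f^*$ is monotone.
\item It is $β$-open because for $B≤U$, \ref{ax:IS-inter} gives $B≤V≤C≤U$. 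Then $V≪_βU$, so $f^*(V)≪_β f^*(U)$ by perfectness, and $f^*(V)∈F_B$.
\end{itemize}
Define $g(B)$ as the point $F_B$. The adjunction $f_!(A)≤B ⇔ A≤g(B)$ then reduces, via separation (Lemma~\ref{lem:polarity-sep-inf} and the description $A≤B ⇔ ∀U(B≤U⇒A≤U)$), to
\[ ∀U\,(B≤U ⇒ f_!(A)≤U) ⇔ ∀U\,(B≤U⇒A≤f^*(U)), \]
which is exactly the adjunction $f_!\dashv f^*$. The only real content is the openness of $F_B$, and that is where perfectness is used.

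\textbf{(2)$⇒$(3).} Let $g ⊣$-right adjoint to $f_!$ and extend $f^*$ by $g$ on $\Kc_1$. Monotonicity must be checked on the four kinds of polarity inequalities.
\begin{itemize}
\item $U≤V$ and $A≤B$ are clear.
\item $B≤U$ in $\Kc_1∪\Oc_1$ must give $g(B)≤f^*(U)$. It suffices that $f_!(g(B))≤B≤U$, which is the counit.
\item $U≤B$ must give $f^*(U)≤g(B)$. This is the main obstacle. I would use the characterization of $U≤A$ in the intermediate structure: it suffices to show that for all $C∈\Kc_2$ and $V∈\Oc_2$ with $C≤f^*(U)$ and $g(B)≤V$, we have $C≤V$.
\end{itemize}
For the last case, from $C≤f^*(U)$ we get $f_!(C)≤U≤B$, hence $C≤g(B)≤V$. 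Here the inequality $f_!(C)≤B$ in $\Kc_1$ is obtained by transitivity in the polarity preorder, and the step $f_!(C)≤B ⇒ C≤g(B)$ uses that the adjunction is in $\Kc$. So transitivity of the polarity order does the work.

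\textbf{(3)$⇒$(1).} Let $h$ be the extension. If $U≪_βV$, pick $U≤A≤V$. Applying $h$ gives $f^*(U)≤h(A)≤f^*(V)$ with $h(A)∈\Kc_2$. By Proposition~\ref{prop:inter-to-cont} this yields $f^*(U)≪_βf^*(V)$. Note that this step needs $h(A)∈\Kc_2$ and that $h$ is monotone for the polarity order, which is exactly the content of (3).

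The delicate point overall is (1)$⇒$(2): making sure $F_B$ is genuinely $β$-open. That is the one place perfectness enters, and everything else follows formally from adjunction manipulations in the intermediate structure.
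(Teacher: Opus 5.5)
Your proof is correct and follows the paper's argument essentially step for step. It uses the same cycle (1)$\Rightarrow$(2)$\Rightarrow$(3)$\Rightarrow$(1): the right adjoint is the point ${\uparrow} f^*[\{U : B \leq U\}]$, made $\beta$-open by perfectness (the paper appeals to Lemma~\ref{lem:charact-open} there), then the same counit and polarity-order arguments, and the same interpolation $U \leq A \leq V$. The only blemish is citing Lemma~\ref{lem:polarity-sep-inf} in the adjunction check, where separation and the definition of the polarity order on $\Kc$ are all you actually use.
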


\begin{proof*}{}
	Suppose that $f^*$ is perfect. If $I ⊆ \Oc_1$ is a $β$-open $α$-filter, then ${↑}f^*[I]$ is also a $β$-open $α$-filter: it is an $α$-filter because $f$ is order-preserving, and it is $β$-open using Lemma~\ref{lem:charact-open}. This defines a map $g : \Kc_1 → \Kc_2$ and it is right adjoint to $f_!$:
	\[ f_!(B) ≤ A ⇔ ∀U≥A : B≤f^*(U) ⇔ B ≤ g(A) \text{.} \]
	
	Suppose that $f_! : \Kc_2 → \Kc_1$ has a right adjoint $g : \Kc_1 → \Kc_2$. We show that $g$ extends $f^* : \Oc_1 → \Oc_2$ to a poset map $\Oc_1 ∪ \Kc_1 → \Oc_2 ∪ \Kc_2$. Suppose $A ≤ U$. Then $g(A) ≤ f^*(U)$ because $f_!(g(A)) ≤ A ≤ U$. Suppose $U ≤ A$. By definition of the polarity order, $f^*(U) ≤ g(A) ⇔ ∀B≤f^*(U) : B≤g(A)$. This is equivalent to $∀B : f_!(B)≤U ⇒ f_!(B)≤A$ which is indeed true because $U≤A$.
	
	Suppose that $f^*$ extends to a poset map $g : \Oc_1 ∪ \Kc_1 → \Oc_2 ∪ \Kc_2$ sending $\Kc_1$ to $\Kc_2$. We show that $f^*$ is perfect. If $U ≪_β V$, then $U ≤ A ≤ V$ for some $A$, so that $f^*(U) ≤ g(A) ≤ f^*(V)$ and $f^*(U) ≪_β f^*(V)$.
\end{proof*}

\begin{cor}{}{}
	A morphism in $\cSt_α\cSup_β$ has a right adjoint (in the category $\cSt_α\cSup_β$) if and only if it is perfect and preserves all suprema.
\end{cor}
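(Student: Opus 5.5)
We prove the two directions directly, using only the adjunction and the way-below relation. Recall that in $\cSt_α\cSup_β$ (that is, $α=α'$ and $β=β'$) the objects are $α$-complete, $β$-cocomplete, $α$-stably $β$-continuous posets. The morphisms are poset maps preserving $α$-meets and $β$-filtered suprema. So a right adjoint \emph{in the category} is a poset right adjoint $r$ of $f^*$ that moreover preserves $α$-meets and $β$-filtered suprema. The $α$-meet part is automatic for any right adjoint, so the real content is preservation of $β$-filtered suprema.

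\emph{($⇒$)} Suppose $f^* ⊣ r$ with $r$ a morphism. As a left adjoint, $f^*$ preserves all suprema that exist. For perfectness, let $U ≪_β V$ in $\Oc_1$, and suppose $f^*(V) ≤ \fjoin{β}_i W_i$ in $\Oc_2$. By adjunction $V ≤ r(\fjoin{β}_i W_i) = \fjoin{β}_i r(W_i)$, using that $r$ preserves $β$-filtered suprema. Then $U ≪_β V$ yields some $i$ with $U ≤ r(W_i)$, i.e.\ $f^*(U) ≤ W_i$. Hence $f^*(U) ≪_β f^*(V)$.

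\emph{($⇐$)} Suppose $f^*$ is perfect and preserves suprema; only $β$-joins will be used.
\begin{itemize}
\item For $W ∈ \Oc_2$, set $S_W ≔ \setst{U ∈ \Oc_1}{f^*(U) ≤ W}$. This is a down-set. It is $β$-filtered: a $β$-small family in $S_W$ has a $β$-join in $\Oc_1$ by $β$-cocompleteness, and $f^*$ preserves it, so the join lies in $S_W$.
\item Hence $r(W) ≔ \fjoin{β} S_W$ exists. Since $f^*$ preserves $β$-filtered suprema, $f^*(r(W)) ≤ W$. Thus $r(W)$ is the largest element of $S_W$, and $f^* ⊣ r$.
\item It remains to show that $r$ preserves $β$-filtered suprema; this is the only step where perfectness enters. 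Let $U ≪_β r(\fjoin{β}_i W_i)$. Perfectness gives $f^*(U) ≪_β f^*(r(\fjoin{β}_i W_i)) ≤ \fjoin{β}_i W_i$. So $f^*(U) ≤ W_i$ for some $i$, i.e.\ $U ≤ r(W_i)$.
\item By $β$-continuity of $\Oc_1$ we conclude $r(\fjoin{β}_i W_i) ≤ \fjoin{β}_i r(W_i)$. The reverse inequality holds by monotonicity of $r$.
\end{itemize}

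The main obstacle is this last step: a poset right adjoint need not preserve $β$-filtered suprema, and perfectness is exactly the hypothesis that supplies it. The existence of $r$ itself rests only on $β$-cocompleteness together with preservation of $β$-joins and $β$-filtered suprema by $f^*$, so it needs no completeness of $\Oc_1$ beyond what the category provides.
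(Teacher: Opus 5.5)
Your proof is correct, but it takes a different route from the paper.

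The paper argues on the compact side. Through the duality of Theorem~\ref{thm:StContSup-dual}, which is compatible with the pointwise order on morphisms and so transports adjunctions, a right adjoint of $f^*$ corresponds to a poset right adjoint $g\colon\Kc_1\to\Kc_2$ of $f_!$ that preserves $\beta$-joins. Proposition~\ref{prop:perfect-iff-ladj} identifies perfectness of $f^*$ with the existence of such a poset right adjoint. Explicitly, $g$ sends the point corresponding to a $\beta$-open $\alpha$-filter $I$ of $\Oc_1$ to the one generated by $f^*[I]$. The only remaining check is that $g$ preserves $\beta$-joins of $\Kc_1$, and this uses that $f^*$ preserves $\beta$-joins.

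You instead stay on the open side. You write the right adjoint explicitly as $r(W)=\bigvee\{U\mid f^*(U)\le W\}$. You then use the classical domain-theoretic fact that a left adjoint preserves $\ll_\beta$ exactly when its right adjoint preserves $\beta$-filtered suprema; the direction from perfectness to preservation needs $\beta$-continuity of $\Oc_1$, as you use.

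What each approach buys:
\begin{itemize}
\item Your argument is more elementary and more general. It never uses $\alpha$-stability, enough points, exactness of the intermediate structure, or the duality itself. So it proves the statement for arbitrary $\beta$-cocomplete $\beta$-continuous posets with $\alpha$-meets, with maps preserving $\alpha$-meets and $\beta$-filtered suprema.
\item The paper's argument ties the corollary to Proposition~\ref{prop:perfect-iff-ladj}. There perfectness appears as the existence of a direct-image map of points $\Kc_1\to\Kc_2$, the analogue of a continuous map between compact ordered spaces.
\end{itemize}
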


\begin{proof*}{}
	Let $f^* : \Oc_1 → \Oc_2$ be a morphism in $\cSt_α\cSup_β$ and let $f_! : \Kc_2 → \Kc_1$ be (the opposite of) its $(α,β)$-dual. If $f^*$ has a right adjoint, then it preserves all suprema and $f_!$ also has a right adjoint. By Proposition~\ref{prop:perfect-iff-ladj}, $f^*$ is perfect.
	
	Reciprocally, suppose that $f^*$ is perfect and preserves all suprema. Let $g : \Kc_1 → \Kc_2$ be the right adjoint of $f_!$ in the category of posets. We show that $g$ preserves all $β$-suprema, so that it is (the opposite of) a morphism in $\cSt_β\cSup_α$ adjoint to $f_!$. We show that $g\p[\big]{\sjoin{β}_i A_i} ≤ \sjoin{β}_i g(A_i)$. Suppose that $\sjoin{β}_i g(A_i) ≤ U$. Then for all $i$, we have $g(A_i) ≤ U$ and thus, by the computation of $g$ in Proposition~\ref{prop:perfect-iff-ladj}, there is $V_i$ such that $A_i≤V_i$ and $f^*(V_i) ≤ U$. We obtain $\sjoin{β}_i A_i ≤ \sjoin{β}_i V_i$ and $\sjoin{β}_i f^*(V_i) = f^*\p[\big]{\sjoin{β}_i V_i} ≤ U$, hence $g\p[\big]{\sjoin{β}_i A_i} ≤ U$.
\end{proof*}

\newpage
\begin{exmps}{}{}~
	\begin{itemize}
		✦ The category of sets and relations is a full subcategory of $\cSt_0\cSup_0$, via $X ↦ 𝒫(X)$. The adjoints are the graphs of functions. This generalizes to posets and ordered relations, which is the category of algebraic posets of $\cSt_0\cSup_0$.
		✦ The category of compact ordered spaces and closed ordered relations is a full subcategory of $\cSt_ω\cSup_ω$ (\cite[Ch.~4]{Tow96}, \cite{JunKegMos2001}). The adjunctions in this category correspond to the continuous order-preserving maps.
		✦ The frame of opens of a poset $P$ with its Alexandroff topology is $\Up(P) = \Flt_0(P)$. The soberification of $P$, dual to $\Flt_0(P)$, is $\Idl_ω(P)$. The pairs $(\Idl_ω(P),\Flt_0(P))$ are the algebraic $(0,ω)$-intermediate structures. The perfect maps $\Idl_ω(P) → \Idl_ω(Q)$ in $\cFlat_0\cCont_ω$ correspond to the poset maps $P→Q$. They are also called ``essential geometric morphisms'' in topos theory. Note that $\cFlat_0\cCont_ω = \cSt_0\cCont_ω$ is the category of continuous posets with Scott-continuous maps, and that $\cFlat_ω\cCont_0 = \cInf_ω\cSup_0$ is the category of completely distributive frames.
		✦ Let $P$ and $Q$ be $α$-complete $β$-cocomplete posets. The perfect maps $\Idl_β(P) → \Idl_β(Q)$ in $\cSt_α\cSup_β$ correspond to the poset maps $P→Q$ preserving $α$-meets. The perfect maps $\Idl_β(P) → \Idl_β(Q)$ that furthermore preserve $β$-suprema correspond to the poset maps $P→Q$ that preserve $α$-meets and $β$-joins. Thus the adjunctions in $\cSt_α\cSup_β$ generalize the morphisms of $α$-complete $β$-cocomplete posets (which is the ``algebraic'' case).
	\end{itemize}
\end{exmps}

\includebibliography


@article{Kaw2021,
	title = {Predicative theories of continuous lattices},
	author = {Kawai, Tatsuji},
	year = 2021,
	month = may,
	journal = {Logical Methods in Computer Science},
	volume = {Volume 17, Issue 2},
	publisher = {Episciences.org}
}

@article{Law1979,
	title = {The duality of continuous posets},
	author = {Lawson, Jimmie D.},
	year = 1979,
	month = jan,
	journal = {Houston Journal of Mathematics},
	volume = {5}
}

@unpublished{FusGeh2025,
	title = {Canonical Extensions Quickly},
	author = {Fussner, Wesley and Gehrke, Mai},
	year = 2025,
	month = aug,
	url = {https://hal.science/hal-05231128v1}
}

@article{Tow2006,
	title = {On the parallel between the suplattice and preframe approaches to locale theory},
	author = {Townsend, C. F.},
	year = 2006,
	month = jan,
	journal = {Annals of Pure and Applied Logic},
	volume = {137},
	number = {1},
	pages = {391--412}
}

@book{MoeVer2000,
	title = {Proper Maps of Toposes},
	author = {Moerdijk, Ieke and Vermeulen, Jacob Johan Caspar},
	year = 2000,
	series = {Memoirs of the American Mathematical Society},
	volume = {148},
	publisher = {American Mathematical Society}
}

@article{Ver1994,
	title = {Proper maps of locales},
	author = {Vermeulen, J. J. C.},
	year = 1994,
	month = feb,
	journal = {Journal of Pure and Applied Algebra},
	volume = {92},
	number = {1},
	pages = {79--107}
}

@article{Nov1982,
	title = {Generalization of Continuous Posets},
	author = {Novak, Dan},
	year = 1982,
	journal = {Transactions of the American Mathematical Society},
	volume = {272},
	number = {2},
	eprint = {1998719},
	pages = {645--667},
	publisher = {American Mathematical Society}
}

@article{Bar1996,
	title = {$\Zc$-continuous posets},
	author = {Baranga, Andrei},
	year = 1996,
	month = may,
	journal = {Discrete Mathematics},
	volume = {152},
	number = {1},
	pages = {33--45}
}

@article{Vic1993,
	title = {Information systems for continuous posets},
	author = {Vickers, Steven},
	year = 1993,
	month = jun,
	journal = {Theoretical Computer Science},
	volume = {114},
	number = {2},
	pages = {201--229}
}

@article{RosWoo1994,
	title = {Constructive complete distributivity IV},
	author = {Rosebrugh, Robert and Wood, R. J.},
	year = 1994,
	month = jun,
	journal = {Applied Categorical Structures},
	volume = {2},
	number = {2},
	pages = {119--144}
}

@book{GieHofKeiLawMisSco2003,
	title = {Continuous Lattices and Domains},
	author = {Gierz, G. and Hofmann, K. H. and Keimel, K. and Lawson, J. D. and Mislove, M. and Scott, D. S.},
	year = 2003,
	series = {Encyclopedia of Mathematics and its Applications},
	publisher = {Cambridge University Press},
	address = {Cambridge}
}

@book{GehvGoo2024,
	title = {Topological Duality for Distributive Lattices: Theory and Applications},
	shorttitle = {Topological Duality for Distributive Lattices},
	author = {Gehrke, Mai and {van Gool}, Sam},
	year = 2024,
	month = mar,
	publisher = {Cambridge University Press},
	shorthand = {GvG24}
}

@misc{AbbJun2026,
	title = {On the symmetry behind duality},
	author = {Abbadini, Marco and Jung, Achim},
	year = 2026,
	month = jul,
	number = {arXiv:2507.18245},
	eprint = {2507.18245},
	primaryclass = {math.LO},
	publisher = {arXiv},
	archiveprefix = {arXiv}
}

@inproceedings{Hof1981,
	title = {Continuous posets, prime spectra of completely distributive complete lattices, and Hausdorff compactifications},
	booktitle = {Continuous Lattices},
	author = {Hoffmann, Rudolf-E.},
	editor = {Banaschewski, Bernhard and Hoffmann, Rudolf-Eberhard},
	year = 1981,
	pages = {159--208},
	publisher = {Springer},
	address = {Berlin, Heidelberg}
}

@inproceedings{Hof1981a,
	title = {Projective sober spaces},
	booktitle = {Continuous Lattices},
	author = {Hoffmann, Rudolf-E.},
	editor = {Banaschewski, Bernhard and Hoffmann, Rudolf-Eberhard},
	year = 1981,
	pages = {125--158},
	publisher = {Springer},
	address = {Berlin, Heidelberg}
}

@phdthesis{Jak17,
	title = {d-Frames as algebraic duals of bitopological spaces},
	author = {Jakl, Tom{\'a}{\v s}},
	year = 2017,
	school = {University of Prague and University of Birmingham}
}

@phdthesis{Kli2012,
	title = {A bitopological point-free approach to compactifications},
	author = {Klinke, Olaf Karl},
	year = 2012,
	school = {University of Birmingham}
}

@phdthesis{Tow96,
	title = {Preframe Techniques in Constructive Locale Theory},
	author = {Townsend, Christopher Francis},
	year = 1996,
	school = {University of London}
}

@article{BanErn1983,
	title = {The category of $\Zc$-continuous posets},
	author = {Bandelt, Hans-J. and Ern{\'e}, Marcel},
	year = 1983,
	month = dec,
	journal = {Journal of Pure and Applied Algebra},
	volume = {30},
	number = {3},
	pages = {219--226}
}

@article{JunSun1996,
	title = {On the Duality of Compact vs. Open},
	author = {Jung, Achim and S{\"u}nderhauf, Philipp},
	year = 1996,
	month = dec,
	journal = {Annals of the New York Academy of Sciences},
	volume = {806},
	number = {1},
	pages = {214--230}
}

@article{vanGool2012,
	title = {Duality and canonical extensions for stably compact spaces},
	author = {{van Gool}, Sam J.},
	year = 2012,
	month = jan,
	journal = {Topology and its Applications},
	volume = {159},
	number = {1},
	pages = {341--359},
	shorthand = {vG12}
}

@article{Kop1995,
	title = {Asymmetry and duality in topology},
	author = {Kopperman, Ralph},
	year = 1995,
	month = sep,
	journal = {Topology and its Applications},
	volume = {66},
	number = {1},
	pages = {1--39}
}

@article{Smy1992a,
	title = {Stable Compactification I},
	author = {Smyth, M. B.},
	year = 1992,
	journal = {Journal of the London Mathematical Society},
	volume = {s2-45},
	number = {2},
	pages = {321--340}
}

@inproceedings{GehVos2011,
	title = {A View of Canonical Extension},
	booktitle = {Logic, Language, and Computation},
	author = {Gehrke, Mai and Vosmaer, Jacob},
	editor = {Bezhanishvili, Nick and L{\"o}bner, Sebastian and Schwabe, Kerstin and Spada, Luca},
	year = 2011,
	pages = {77--100},
	publisher = {Springer},
	address = {Berlin, Heidelberg}
}

@article{Kel1963,
	title = {Bitopological Spaces},
	author = {Kelly, J. C.},
	year = 1963,
	journal = {Proceedings of the London Mathematical Society},
	volume = {s3-13},
	number = {1},
	pages = {71--89}
}

@article{Sua2022,
	title = {The category of finitary biframes as the category of pointfree bispaces},
	author = {Suarez, Anna Laura},
	year = 2022,
	month = feb,
	journal = {Journal of Pure and Applied Algebra},
	volume = {226},
	number = {2},
	pages = {106783}
}

@TechReport{Jung06,
	author = 	 {A. Jung and M. A. Moshier},
	title = 	 {On the bitopological nature of {S}tone duality},
	institution =  {University of Birmingham},
	year = 	 2006,
	number =	 {CSR-06-13}
}

@article{Ern1999,
	title = {$\Zc$-Continuous Posets and Their Topological Manifestation},
	author = {Ern{\'e}, Marcel},
	year = 1999,
	month = jun,
	journal = {Applied Categorical Structures},
	volume = {7},
	number = {1},
	pages = {31--70}
}

@inproceedings{Mar1981,
	title = {A motivation and generalization of Scott's notion of a continuous lattice},
	booktitle = {Continuous Lattices},
	author = {Markowsky, George},
	editor = {Banaschewski, Bernhard and Hoffmann, Rudolf-Eberhard},
	year = 1981,
	pages = {298--307},
	publisher = {Springer},
	address = {Berlin, Heidelberg}
}

@article{ErnZha2001,
	title = {$\Zc$-Join Spectra of $\Zc$-Supercompactly Generated Lattices},
	author = {Ern{\'e}, Marcel and Zhao, Dongsheng},
	year = 2001,
	month = jan,
	journal = {Applied Categorical Structures},
	volume = {9},
	number = {1},
	pages = {41--63}
}

@article{WriWagTha1978,
	title = {A uniform approach to inductive posets and inductive closure},
	author = {Wright, J. B. and Wagner, E. G. and Thatcher, J. W.},
	year = 1978,
	month = jan,
	journal = {Theoretical Computer Science},
	volume = {7},
	number = {1},
	pages = {57--77}
}

@article{YuaLi2019,
	title = {The Duality Theory of General $\Zc$-continuous Posets},
	author = {Yuan, Zhenzhu and Li, Qingguo},
	year = 2019,
	month = aug,
	journal = {Electronic Notes in Theoretical Computer Science},
	series = {The proceedings of ISDT 2019, the 8th International Symposium on Domain Theory and Its Applications, ISDT 2019},
	volume = {345},
	pages = {281--292}
}

@incollection{HofMis1985,
	title = {Free Objects in the Category of Completely Distributive Lattices},
	booktitle = {Continuous Lattices and Their Applications},
	author = {Hofmann, Karl H. and Mislove, Michael},
	year = 1985,
	publisher = {CRC Press}
}

@inproceedings{Law1987,
	title = {The versatile continuous order},
	booktitle = {Mathematical Foundations of Programming Language Semantics},
	author = {Lawson, Jimmie D.},
	editor = {Main, M. and Melton, A. and Mislove, M. and Schmidt, D.},
	year = 1988,
	pages = {134--160},
	publisher = {Springer},
	address = {Berlin, Heidelberg}
}

@article{JunKegMos2001,
	title = {Stably Compact Spaces and Closed Relations},
	author = {Jung, Achim and Kegelmann, Mathias and Moshier, M. Andrew},
	year = 2001,
	month = nov,
	journal = {Electronic Notes in Theoretical Computer Science},
	series = {MFPS 2001,Seventeenth Conference on the Mathematical Foundations of Programming Semantics},
	volume = {45},
	pages = {209--231}
}

@incollection{KurMosJun2023,
	title = {Stone Duality for~Relations},
	booktitle = {Samson Abramsky on Logic and Structure in Computer Science and Beyond},
	author = {Kurz, Alexander and Moshier, Andrew and Jung, Achim},
	editor = {Palmigiano, Alessandra and Sadrzadeh, Mehrnoosh},
	year = 2023,
	pages = {159--215},
	publisher = {Springer International Publishing},
	address = {Cham}
}

@phdthesis{Mar2023,
	title = {Categorical logic from the perspective of duality and compact ordered spaces},
	author = {Marqu{\`e}s, J{\'e}r{\'e}mie},
	year = 2023,
	month = sep,
	collaborator = {Gehrke, Mai},
	school = {Universit\'e C\^ote d'Azur}
}

@article{Ern2016,
	title = {Choice-Free Dualities for Domains},
	author = {Ern{\'e}, Marcel},
	year = 2016,
	month = oct,
	journal = {Applied Categorical Structures},
	volume = {24},
	number = {5},
	pages = {471--496}
}

@book{BalDwi1974,
	title = {Distributive Lattices},
	author = {Balbes, Raymond and Dwinger, Philip},
	year = 1974,
	publisher = {University of Missouri Press},
	address = {Columbia, Miss.}
}

@article{GhiMel1997,
	title = {Constructive canonicity in non-classical logics},
	author = {Ghilardi, Silvio and Meloni, Giancarlo},
	year = 1997,
	month = jun,
	journal = {Annals of Pure and Applied Logic},
	volume = {86},
	number = {1},
	pages = {1--32}
}

@article{DunGehPal2005,
	title = {Canonical Extensions and Relational Completeness of Some Substructural Logics},
	author = {Dunn, J. Michael and Gehrke, Mai and Palmigiano, Alessandra},
	year = 2005,
	journal = {The Journal of Symbolic Logic},
	volume = {70},
	number = {3},
	eprint = {27588391},
	pages = {713--740},
	publisher = {Association for Symbolic Logic}
}

@article{GehHar2001,
	title = {Bounded Lattice Expansions},
	author = {Gehrke, Mai and Harding, John},
	year = 2001,
	month = apr,
	journal = {Journal of Algebra},
	volume = {238},
	number = {1},
	pages = {345--371}
}

@article{GehJanPal2013,
	title = {$\Delta_1$-completions of a Poset},
	author = {Gehrke, Mai and Jansana, Ramon and Palmigiano, Alessandra},
	year = 2013,
	month = mar,
	journal = {Order},
	volume = {30},
	number = {1},
	pages = {39--64}
}
\end{document}